\def\Limsup{\mathop{{\rm Lim}\,{\rm sup}}}
\def\F{Fr\'{e}chet\ }
\def\gph{\mbox{\rm gph}\,}
\def\ox{\bar{x}}
\def\epi{\mbox{\rm epi}\,}
\newtheorem{theorem}{Theorem}[section]
\newtheorem{proposition}[theorem]{Proposition}
\theoremstyle{definition}
\newtheorem{definition}[theorem]{Definition}
\newtheorem{example}[theorem]{Example}
\newtheorem{remark}[theorem]{Remark}
\title{On second-order optimality conditions for $C^{1,1}$ optimization problems via Lagrangian functions}
\author{DUONG THI VIET AN$^{1}$ AND NGUYEN VAN TUYEN$^{2}$}
\address{$^{1}$Department of Mathematics and Informatics, Thai Nguyen University of Sciences, Thai Nguyen 250000, Vietnam}
\email{andtv@tnus.edu.vn}
\address{$^2$Department of Mathematics, Hanoi Pedagogical University 2, Xuan Hoa, Phuc Yen, Vinh Phuc, Vietnam}
\email{nguyenvantuyen83@hpu2.edu.vn; tuyensp2@yahoo.com}
\date{\today}
\keywords{Constrained optimization; limiting second-order subdifferential; tangent cones; Lagrange multiplier;  second-order optimality conditions}
\subjclass{49K30;  90C46; 49J52;  49J53; 90C56}
\begin{document}
	
	\maketitle

	\begin{abstract}
This paper focuses on optimality conditions for $C^{1,1}$-smooth optimization problems subject to inequality and equality constraints. By employing the concept of limiting (Mordukhovich) second-order subdifferentials to the Lagrangian function associated with the problem, we derive new second-order optimality conditions for the considered problem. Applications for multiobjective optimization problems are studied as well. These results extend and refine existing results in the literature.
	\end{abstract}
	
\section{Introduction}
The theory of second-order optimality conditions has been a subject of significant interest among researchers, see, e.g.~\cite{An2021,AnYen2021,AnXuYen,Andreani_Martinez_Schuverdt_2007,Ben-Tal1980,Ben-Tal1982,Bonnans_Cominetti_Shapiro_1999,Bonnans_Shapiro_2000,ChieuLeeYen2017,Feng_Li_2020,Ginchev_Ivanov_2008,Gutierrez_Jimenez_Novo_2009,HSN_1984,Huy_Tuyen,Huy-Kim-Tuyen,Hung-Tuan-Tuyen,Khanh-et al-24,Jeyakumar_Luc_1998,Jeyakumar_Wang_1999,Penot1999,Tuyen-Huy-Kim,Yang_1993} and the references therein. Hiriart-Urruty et al. in~\cite{HSN_1984} employed generalized Hessian matrices to derive second-order necessary conditions for a class of differentiable optimization problems. Jeyakumar and Luc~\cite{Jeyakumar_Luc_1998} and Jeyakumar and Wang~\cite{Jeyakumar_Wang_1999} utilized the concept of approximate Hessians to establish necessary and sufficient optimality conditions for unconstrained and constrained optimization problems involving continuously differentiable functions, respectively. Another approach, involving the use of second-order directional derivatives, has been explored by Ginchev and Ivanov~\cite{Ginchev_Ivanov_2008} and Guti\'errez et al.~\cite{Gutierrez_Jimenez_Novo_2009}.

The concept of second-order subdifferential, as introduced by Mordukhovich in~\cite[Remark~2.8]{Mordukhovich_1992}, has emerged as a powerful tool in variational analysis. Defined as the coderivative of the first-order subdifferential mapping, this concept, along with its rich calculus (see~\cite[pp. 121-132]{Mordukhovich_2006}, \cite{Mordukhovich-2024},  \cite{Mo_Ou_2001}), has found applications in diverse areas, including stability and sensitivity analysis, second-order optimality conditions, and characterizations of convexity. For a comprehensive overview of recent research on the theory of second-order subdifferentials and their applications, we refer the reader to the recent monograph by Mordukhovich~\cite{Mordukhovich-2024}. This comprehensive work, consisting of nine interrelated chapters, provides a valuable reference for researchers working in this~area.

By using the second-order subdifferential concepts mentioned above, Chieu et al.~\cite{ChieuLeeYen2017}, Huy and Tuyen \cite{Huy_Tuyen}, Feng and Li~\cite{Feng_Li_2020}, and Nadi and Zafarani~\cite{Nadi_Jafarani2022} have established second-order necessary and sufficient optimality conditions for optimization problems in both finite and infinite-dimensional spaces. 
Namely, in~\cite{ChieuLeeYen2017}, the authors have used the limiting second-order subdifferentials to characterize locally optimal solutions of
$C^{1,1}$-smooth unconstrained minimization problems. In \cite{Huy_Tuyen}, Huy and Tuyen introduced the so-called second-order symmetric subdifferential and derived new second-order necessary and sufficient optimality conditions for a class of differentiable optimization problems with geometric constraints. Thereafter, these results are applied to study second-order optimality conditions for vector optimization problems with inequality constraints \cite{Huy-Kim-Tuyen,Tuyen-Huy-Kim}. By applying the notion of the pseudoconvexity at a point in a direction and the second-order characterizations of (strict and strong) pseudoconvexity, the authors of~\cite{Nadi_Jafarani2022} have obtained second-order optimality conditions (for strict local minima and isolated local minima) in nonlinear programming with continuously differentiable data. Recently, Feng and Li \cite{Feng_Li_2020} established a second-order mean value inequality for $C^{1,1}$ functions, employing the concept of limiting (Mordukhovich) second-order subdifferentials. By applying this inequality, the authors derived novel second-order Fritz John-type necessary and sufficient optimality conditions for inequality-constrained optimization problems.

Our approach departs from~\cite{Feng_Li_2020} by avoiding the use of second-order tangent sets to the feasible set. Instead, we employ the limiting second-order subdifferential of the Lagrangian function to obtain second-order optimality conditions for $C^{1,1}$-smooth constrained optimization problems. Our approach differs significantly from that of~\cite{Feng_Li_2020}.
Firstly, we focus on optimization problems with finite inequality and equality constraints. Secondly, we employ the Lagrangian method and utilize information from the first-order tangent set, whereas the optimality conditions in \cite{Feng_Li_2020} rely on information from the second-order tangent set.
Furthermore, in Theorem~\ref{main_theorem}, we study the second-order necessary optimality conditions for  isolated solutions of order 2 as well.	We also present a counterexample to demonstrate that our theorem is applicable in situations where the result in \cite[Theorem 4.2]{Feng_Li_2020} cannot be applied.

The paper organization is as follows. Section \ref{Section-2} provides some basic definitions and auxiliary results. The second-order necessary optimality conditions are analyzed in Section \ref{Section-3} while second-order sufficient optimality conditions are investigated in Section~\ref{Section-4}. Applications for multiobjective optimization problems are presented in Section~\ref{Section-5}. Some conclusions are given in the final section.

\section{Preliminaries}\label{Section-2}
Throughout the paper, the considered spaces are finite-dimensional Euclidean with the inner product and the norm being denoted by $\langle \cdot, \cdot \rangle$ and by $||\cdot||$, respectively.
Along with single-valued maps usually denoted by $f : \mathbb{R}^n\rightarrow \mathbb{R}^m$, we consider set-valued maps (or multifunctions) $F :  \mathbb{R}^n \rightrightarrows {\mathbb{R}^m}$ with values $F(x)$ in the collection of all the subsets of $\mathbb{R}^m$. The limiting construction
\begin{align*}
	\Limsup\limits_{x\rightarrow \bar x} F(x):=\bigg\{ y \in \mathbb{R}^m \mid \exists x_k \rightarrow \bar x, y_k \rightarrow y \ \mbox{with}\ y_k\in F(x_k), \forall k=1,2,....\bigg\}
\end{align*}
is known as the \textit{Painlev\'e-Kuratowski outer/upper limit} of $F$ at $\bar x$. All the maps considered below are proper, i.e., $F (x)\not= \emptyset$ for some $x\in \mathbb{R}^n$.

\begin{definition} {\rm (see~\cite[pp. 5--6]{Mordukhovich_2018})}\rm
	Let $\Omega$ be a nonempty subset of $\mathbb{R}^n$ and $\bar x \in \Omega$. The \textit{Fr\'echet (regular) normal cone}  to $\Omega$ at $\bar x$ is defined by
	\begin{align*}
		\widehat N(\bar x, \Omega)=\Big\{ v\in \mathbb{R}^n\mid \limsup\limits_{x \xrightarrow{\Omega}\bar x} \dfrac{\langle v, x-\bar x \rangle}{\|x-\bar x\|} \leq 0 \Big\},
	\end{align*}
	where $x \xrightarrow{\Omega} \bar x$ means that $x \rightarrow \bar x$ and $ x\in \Omega$. The \textit{limiting (Mordukhovich) normal cone} to $\Omega$ at $\bar x$ is given by
	\begin{align*}
		N(\bar x, \Omega)=\Limsup\limits_{ x \xrightarrow{\Omega} \bar x} \widehat{N}(x, \Omega).
	\end{align*}
	We put $\widehat N(\bar x,\Omega)=N(\bar x,\Omega) =\emptyset$ if $\bar x \not\in \Omega$.
\end{definition}	

Clearly, one always has
\begin{align*}
	\widehat N(\bar x,\Omega) \subset N(\bar x,\Omega), \ \forall \Omega \subset \mathbb{R}^n, \forall \bar x \in \Omega.
\end{align*}
If $\Omega$ is convex, one has
\begin{align*}
	\widehat N(\bar x,\Omega) = N(\bar x,\Omega):=\{ v\in \mathbb{R}^n \mid \langle v,x- \bar x \rangle \le 0,  \ \forall x\in \Omega\},
\end{align*}
i.e., the regular normals to $\Omega$  at $\bar x$ coincides with the limiting normal cone and both constructions reduce to the normal cone in the sense of convex analysis.

\begin{definition}\rm  (see, e.g., \cite[Definition 3.41]{Jahn_2011}) Let $\Omega$ be a nonempty subset of $\mathbb{R}^n$. An element $v\in \mathbb{R}^n$ is called a \textit{tangent} to $\Omega$ at a point $\bar x\in \Omega$ if there are a sequence $\{x_k\}\subset \Omega$ and a sequence $\{t_k\}$ of positive real numbers such that $t_k\rightarrow 0^+$, $x_k\rightarrow \bar x$, and $\lim\limits_{k\rightarrow \infty} \dfrac{x_k-\bar x}{t_k}=v.$
\end{definition}	

The set $T(\bar x,\Omega)$ of all tangents to $\Omega$ at $\bar x$ is called the \textit{contingent cone} (or \textit{the Bouligand-Severi tangent cone} \cite[Chapter~1]{Mordukhovich_2006}) to $\Omega$ at $\bar x$. Note that $v\in T(\bar x,\Omega)$ if and only if there exist a sequence $\{t_k\}$ of positive scalars and a sequence of vectors $\{v_k\}$ with $t_k\rightarrow 0^+$ and $v_k \rightarrow v$ as $k\rightarrow \infty$ such that $x_k:=\bar x +t_k v_k$ belongs to $\Omega$ for every $k\in \mathbb{N}.$

In the case, where $\Omega$ is a convex set,  $[T(\bar x, \Omega)]^*=N(\bar x, \Omega)$ and $[N(\bar x, \Omega)]^*=T(\bar x, \Omega)$, where 
$$[N(\bar x, \Omega)]^*:= \{ v\in \mathbb{R}^n \mid \langle v,  x \rangle \le 0, \ \forall x\in N(\bar x, \Omega) \}.$$

\medskip

Let  $F: \mathbb{R}^n\rightrightarrows{\mathbb{R}^m}$ be a set-valued map with the \textit{domain} $${\rm{dom}}\, F:=\{ x \in \mathbb{R}^n \mid F(x)\not=\emptyset\}$$ and the \textit{graph} $${\rm{gph}}\, F:=\{ (x,y) \in \mathbb{R}^n \times \mathbb{R}^m \mid y \in F(x)\}.$$  

\begin{definition}\rm  {\rm (see~\cite[Definition 1.11]{Mordukhovich_2018})}  Let $  F: \mathbb{R}^n\rightrightarrows{\mathbb{R}^m}$ be a multifunction and  $(\bar x, \bar y) \in  {\rm{gph}}\, F$.	The  \textit{limiting (Mordukhovich) coderivative}  of  $F$ at $(\bar x, \bar y)$ is a multifunction $D^* F(\bar x, \bar y): \mathbb{R}^m \rightrightarrows{\mathbb{R}^n}$ with the values
	\begin{align*}
		D^* F(\bar x,\bar y)(v):=&\left\{u\in \mathbb{R}^n \mid (u, -v) \in  N\left((\bar x, \bar y),\gph\, F\right)\right\}, \quad v \in \mathbb{R}^m.
	\end{align*}
	If $(\bar x, \bar y) \notin {\rm{gph}}\, F$, one puts $ D^* F(\bar x, \bar y)(v)=\emptyset$ for any $v\in \mathbb{R}^m$. The symbol $D^* F(\bar x)$ is used when $F$ is single-valued at $\bar x$ and $\bar y=F(\bar x)$. 
\end{definition}

Consider a function $f: \mathbb{R}^n\rightarrow \overline{\mathbb{R}}=\mathbb{R}\cup \{\pm \infty\}$ with the \textit{effective domain} $$\mbox{dom}\, f:=\{x \in \mathbb{R}^n \mid f(x) < +\infty\},$$ the \textit{epigraph} $$ {\rm{epi}}\, f:=\{ (x, \alpha) \in \mathbb{R}^n \times \mathbb{R} \mid \alpha \ge f (x)\},$$
and the \textit{hypergraph} $$ {\rm{hypo}}\, f:=\{ (x, \alpha) \in \mathbb{R}^n \times \mathbb{R} \mid \alpha \le f (x)\}.$$  

\begin{definition}\rm  {\rm (see~\cite[Definition 1.77]{Mordukhovich_2006})}
	Suppose that $\bar {x} \in \mathbb{R}^n$ and $|f(\bar {x})| < \infty.$ 	One calls the set
	\begin{align*}
		\partial f(\bar x):=\left\{v \in \mathbb{R}^n\mid (v, -1) \in  N((\bar {x}, f(\bar{ x})), \epi f)\right\}
	\end{align*}
	the  \textit{limiting (Mordukhovich) subdifferential} of $f$ at $\bar {x}$. If $ |f(\bar x)| = \infty$, one lets  $\partial f(\bar x)$ to be an
	empty set.
\end{definition}

One can use the notion of coderivative to construct the second-order generalized differential theory of extended-real-valued functions.

\begin{definition}\rm   {\rm (see \cite[Definition 3.17]{Mordukhovich_2018})} {\rm Let $f:\mathbb{R}^n\rightarrow \overline{\mathbb{R}}$
		be a function with a finite value at $\bar{x}.$ For any $\bar y\in \widehat\partial f (\bar x)$, the map $\widehat\partial^2 f(\bar x,\bar y):\mathbb{R}^n\rightrightarrows \mathbb{R}^n$ with the values
		\begin{align*}
			\partial^2 f(\bar x,\bar y)(v):=(D^* \partial f)(\bar x,\bar y)(v)=\{u \mid (u,-v)\in N ( (\bar x, \bar y), \gph \partial f)\}
		\end{align*} is said to be the \textit{limiting (Mordukhovich)
			second-order subdifferential} of $f$ at $\bar x$ relative to	$\bar y.$}
\end{definition}

If $\partial f (\bar x)$ is a singleton, the symbol $\bar y$ in the notation $\partial^2 f (\bar x,\bar y)(v)$ will be omitted.  It is well-known \cite[p. 124]{Mordukhovich_2018} that if $f$ is a $C^2$-smooth function around $\bar x$, i.e., $f$ is twice continuously differentiable in a neighborhood of $\ox$, then  
\begin{align*}
	\partial^2 f(\bar x)(v)=\{\nabla^2 f (\ox)^* v\}= \{\nabla^2 f(\bar x)v\}, \  v\in \mathbb{R}^n
\end{align*}
with $\nabla^2 f (\ox)$ being the Hessian matrix of $f$ at $\ox$.

\medskip
Let $D$ be an open subset of $\mathbb{R}^n$. We denote by $C^{1,1}(D)$ the class of all real-valued functions $f$, which are \F\ differentiable on $D$, and whose gradient mapping $\nabla f (\cdot)$ is locally Lipschitz on $D.$ According to~\cite[Theorem 1.90]{Mordukhovich_2006}, if $f\in C^{1,1}(D)$ and $\bar x\in D$, one has
\begin{align}\label{ct2.1}
	\partial^2 f(\bar x) (v):= \partial^2 f (\bar x, \nabla f (\bar x))(v)=\partial \langle v, \nabla f \rangle (\bar x), \ \forall v\in \mathbb{R}^n.
\end{align}

The following properties can obtained directly from the definition.
\begin{proposition}\label{property}
	Let $f \in C^{1,1}(D)$ and $\bar x\in D$. The following assertions hold: \par 
	{\rm (i)} For any $\lambda \ge 0$, one has $\partial^2 f(\bar x)(\lambda v) = \lambda \partial^2 f(\bar x)(v), \forall v\in \mathbb{R}^n.$
	\par
	{\rm (ii)} For any $v\in \mathbb{R}^n$ the mapping $x \mapsto \partial^2 f(x) (v)$ is locally bounded. Moreover, if $x_k \to \bar x$, $x_k^* \to x^*$, $x_k^* \in \partial^2 f(x_k)(v)$ for all $k\in \mathbb{N}$, then $x^*\in \partial^2 f(\bar x)(v).$
\end{proposition}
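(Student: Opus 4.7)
The plan is to reduce both assertions to known properties of the limiting subdifferential of a single locally Lipschitz function, via the scalarization formula~\eqref{ct2.1}. Fix $v\in\mathbb{R}^n$ and set $\varphi_v(x):=\langle v,\nabla f(x)\rangle$. Since $\nabla f$ is locally Lipschitz on $D$, so is $\varphi_v$, and~\eqref{ct2.1} yields $\partial^2 f(x)(v)=\partial\varphi_v(x)$ for every $x\in D$. Every claim about $\partial^2 f(\cdot)(v)$ then becomes a claim about the limiting first-order subdifferential of a locally Lipschitz function, for which the textbook machinery applies.

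For part (i), the key observation is $\varphi_{\lambda v}=\lambda\varphi_v$. For $\lambda>0$ I would invoke the positive homogeneity $\partial(\lambda g)(\bar x)=\lambda\,\partial g(\bar x)$; this is a direct consequence of the analogous identity for the Fr\'echet subdifferential $\widehat\partial$, together with the commutation of the Painlev\'e--Kuratowski outer limit with multiplication by a positive scalar. The degenerate case $\lambda=0$ must be checked separately: since $\varphi_0\equiv 0$ we get $\partial^2 f(\bar x)(0)=\partial 0(\bar x)=\{0\}$, and since $\varphi_v$ is Lipschitz near $\bar x$ the set $\partial^2 f(\bar x)(v)$ is nonempty, so $0\cdot\partial^2 f(\bar x)(v)=\{0\}$ as well. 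Combining, $\partial^2 f(\bar x)(\lambda v)=\lambda\,\partial^2 f(\bar x)(v)$ in both regimes.

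For part (ii), local boundedness is the classical estimate for Lipschitz functions: if $\varphi_v$ is $L$-Lipschitz on a neighborhood $U$ of $\bar x$, then $\partial\varphi_v(x)$ is contained in the closed ball of radius $L$ for every $x\in U$, which gives local boundedness of $x\mapsto\partial^2 f(x)(v)$. The sequential closure statement follows from the robustness (closed-graph) property of the limiting subdifferential of a continuous function. Indeed, $\partial\varphi_v$ is defined via an outer limit of $\widehat\partial\varphi_v$, so the outer limit of graph-convergent sequences from $\gph\partial\varphi_v$ still lies in $\gph\partial\varphi_v$; concretely, the normal-cone pairs $((x_k,\varphi_v(x_k)),(x_k^*,-1))$ belong to $N(\cdot,\epi\varphi_v)$ and, by continuity of $\varphi_v$, pass to the limit inside that set.

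The main conceptual obstacle is ensuring that the manipulations in (i) and (ii) genuinely commute with the outer-limit construction defining $\partial$; once this is accepted (both facts are standard for the limiting subdifferential of locally Lipschitz functions on $\mathbb{R}^n$), the results transfer back to $\partial^2 f$ through~\eqref{ct2.1} without further work. No second-order calculus beyond the scalarization formula is needed.
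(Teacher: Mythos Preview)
Your proof is correct and is precisely the kind of argument the paper has in mind: the paper does not give an explicit proof of this proposition, stating only that ``the following properties can [be] obtained directly from the definition,'' and your route through the scalarization formula~\eqref{ct2.1} together with standard first-order facts about $\partial$ for locally Lipschitz functions (positive homogeneity, subgradient bound by the Lipschitz constant, robustness/closed graph) is exactly how one unpacks that claim. There is nothing to add or correct.
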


The Taylor formula for $C^{1,1}$ functions, employing the limiting second-order subdifferential, is central to our investigation. 
\begin{theorem} {\rm (see~\cite[Theorem 3.1]{Feng_Li_2020}}\label{Taylor_formula}
	Let $f \in C^{1,1}(D)$ and $[a,b]\subseteq D$. Then, there exist $z\in \partial^2 f(\xi) (b-a)$, where $\xi \in [a,b]$, $z'\in \partial^2 f(\xi') (b-a)$, where $\xi' \in [a,b]$, such~that
	$$ \dfrac{1}{2} \langle z', b-a \rangle \le f(b)-f(a)-\langle\nabla f(a), b-a \rangle \le \dfrac{1}{2}\langle z, b-a \rangle.$$
\end{theorem}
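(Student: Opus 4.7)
The plan is to reduce the inequality to a one-dimensional second-order mean-value argument on the segment $[a,b]$ and then transfer the resulting limiting subgradient back to $\partial^{2} f$ by combining the chain rule with identity~\eqref{ct2.1}. Parameterize the segment by $\phi(t):=f(a+t(b-a))$ for $t\in[0,1]$; since $f\in C^{1,1}(D)$ and $[a,b]\subseteq D$, we have $\phi\in C^{1,1}([0,1])$ with $\phi'(t)=\langle \nabla f(a+t(b-a)),\,b-a\rangle$, and
\[
R\;:=\;f(b)-f(a)-\langle\nabla f(a),b-a\rangle\;=\;\phi(1)-\phi(0)-\phi'(0).
\]
Introduce the auxiliary function $u(t):=\phi(t)-\phi(0)-t\phi'(0)-R\,t^{2}$; then $u\in C^{1,1}([0,1])$, $u(0)=u(1)=0$, $u'(0)=0$, and Rolle's theorem applied to $u\in C^{1}$ furnishes $\tau_{1}\in(0,1)$ with $u'(\tau_{1})=0$. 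Hence the Lipschitz function $u'$ vanishes at both endpoints of $[0,\tau_{1}]$.

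\textbf{Nonsmooth Rolle on $u'$.} Since $u'$ is Lipschitz on $[0,\tau_{1}]$, by Rademacher's theorem it is differentiable a.e., and the identity $\int_{0}^{\tau_{1}}(u')'(s)\,ds=u'(\tau_{1})-u'(0)=0$ forces the existence of differentiability points $s_{k}^{+},s_{k}^{-}$ at which $(u')'(s_{k}^{+})\ge 0$ and $(u')'(s_{k}^{-})\le 0$ (unless $u'\equiv 0$, a degenerate case treated at the end). Passing to limits of convergent subsequences, and using the one-dimensional characterization of the limiting subdifferential of a Lipschitz function as the set of limits of its classical derivatives at differentiability points, I obtain $\tau_{+},\tau_{-}\in[0,\tau_{1}]$ and $\xi_{+}\in\partial u'(\tau_{+})$, $\xi_{-}\in\partial u'(\tau_{-})$ with $\xi_{+}\ge 0$ and $\xi_{-}\le 0$. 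Since $\partial u'(\tau)=\partial \phi'(\tau)-2R$, this gives $v_{\pm}:=\xi_{\pm}+2R\in\partial\phi'(\tau_{\pm})$ with $v_{+}\ge 2R$ and $v_{-}\le 2R$.

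\textbf{Transfer to $\partial^{2} f$.} Writing $\phi'=G\circ L$ with $G(x):=\langle \nabla f(x),b-a\rangle$ and $L(t):=a+t(b-a)$, the Mordukhovich chain rule for the composition of a Lipschitz function with a smooth map, together with~\eqref{ct2.1}, yields
\[
\partial\phi'(\tau)\;\subseteq\;\{\langle b-a,\,z\rangle\;:\;z\in\partial^{2} f(a+\tau(b-a))(b-a)\}.
\]
Applying this inclusion to $v_{+}\in\partial\phi'(\tau_{+})$ produces $z\in\partial^{2} f(\xi)(b-a)$ with $\xi:=a+\tau_{+}(b-a)\in[a,b]$ and $\langle z,b-a\rangle=v_{+}\ge 2R$, i.e.\ $R\le\tfrac{1}{2}\langle z,b-a\rangle$. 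Symmetrically, applying it to $v_{-}$ gives $z'\in\partial^{2} f(\xi')(b-a)$ at $\xi':=a+\tau_{-}(b-a)\in[a,b]$ with $\tfrac{1}{2}\langle z',b-a\rangle\le R$, completing the two-sided estimate.

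\textbf{Main obstacle.} The technically delicate ingredient is the one-dimensional nonsmooth Rolle argument for the Lipschitz function $u'$: producing limiting (not merely Clarke) subgradients of $u'$ at interior points with the two required signs. This rests on Rademacher's theorem, the outer-limit description of $\partial$ via regular subgradients at differentiability points, and the sign-alternation forced by $\int_{0}^{\tau_{1}}(u')'(s)\,ds=0$. The degenerate case $u'\equiv 0$ on $[0,\tau_{1}]$, in which $\phi'$ is affine on $[0,\tau_{1}]$ and both inequalities hold with equality, must be verified directly.
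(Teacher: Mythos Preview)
The paper does not prove this statement; it is quoted from \cite[Theorem~3.1]{Feng_Li_2020}, so there is no in-paper argument to compare against. Your reduction to the scalar function $\phi(t)=f(a+t(b-a))$, Rolle's theorem applied to $u(t)=\phi(t)-\phi(0)-t\phi'(0)-Rt^{2}$, and the sign argument on $(u')'$ via $\int_{0}^{\tau_{1}}(u')'\,ds=0$ is a correct route, and the transfer step through the chain rule together with~\eqref{ct2.1} is valid (the inclusion $\partial(G\circ L)(\tau)\subseteq\nabla L(\tau)^{*}\partial G(L(\tau))$ for smooth $L$ and locally Lipschitz $G$ is standard; see, e.g., \cite[Corollary~3.43]{Mordukhovich_2006}).

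Two small corrections that do not affect validity. Your description of the limiting subdifferential of a Lipschitz function as ``the set of limits of its classical derivatives at differentiability points'' is the Bouligand subdifferential, which is in general strictly smaller than the limiting one (for $|\cdot|$ at $0$ the former is $\{-1,1\}$ while $\partial|\cdot|(0)=[-1,1]$). You only use the inclusion direction---that a classical derivative at a differentiability point lies in $\hat\partial$ and hence in $\partial$---so the argument stands. Second, the passage to limits is unnecessary: from $\int_{0}^{\tau_{1}}(u')'\,ds=0$ one gets directly differentiability points $s^{+},s^{-}\in(0,\tau_{1})$ with $(u')'(s^{+})\ge 0$ and $(u')'(s^{-})\le 0$ (if $(u')'>0$ a.e.\ the integral would be positive, and symmetrically), so taking $\tau_{\pm}:=s^{\pm}$ avoids both the subsequence extraction and the separate treatment of the degenerate case.
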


\medskip

Consider the unconstrained problem
\begin{align}\tag{P}
	\label{unconstraint_problem}
	\min \{ f(x)\mid x \in \mathbb{R}^n\},
\end{align}
where $f\in C^{1,1}(\mathbb{R}^n).$ 

We end this section by results on the second-order necessary and sufficient optimality conditions for~\eqref{unconstraint_problem} which are formulated in~\cite{ChieuLeeYen2017}.
\begin{theorem} {\rm (see~\cite[Theorem 3.7]{ChieuLeeYen2017})}
	If $\bar x$ is a local solution of~\eqref{unconstraint_problem}, then
	\par {\rm (i)} $\nabla f(\bar x)=0,$
	\par 
	{\rm (ii)} for each $v\in \mathbb{R}^n$, there exists $z\in \partial^2 f(\bar x)(v)$ such that $\langle z, v \rangle \ge 0$.
\end{theorem}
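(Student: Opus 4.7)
The plan is to split the argument into the classical Fermat rule and a limiting second-order estimate obtained from the $C^{1,1}$ Taylor formula in Theorem~\ref{Taylor_formula}. For part (i), since $f$ is Fr\'echet differentiable on $\mathbb{R}^n$ and $\bar x$ is a local minimizer, the usual first-order necessary condition gives $\nabla f(\bar x)=0$; no second-order machinery is required here.

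For part (ii), fix an arbitrary $v \in \mathbb{R}^n$ and choose positive scalars $t_k \to 0^+$ small enough that the local minimality of $\bar x$ yields $f(\bar x + t_k v) \ge f(\bar x)$ for every $k$. I would apply the upper estimate of Theorem~\ref{Taylor_formula} on the segment $[\bar x,\, \bar x + t_k v]$, using $\nabla f(\bar x)=0$ from (i) to kill the first-order term: there exist $\xi_k \in [\bar x,\, \bar x + t_k v]$ and $z_k \in \partial^2 f(\xi_k)(t_k v)$ with
$$0 \;\le\; f(\bar x + t_k v) - f(\bar x) \;\le\; \tfrac{1}{2}\langle z_k,\, t_k v\rangle.$$
Invoking the positive homogeneity in Proposition~\ref{property}(i), I rewrite $z_k = t_k w_k$ with $w_k \in \partial^2 f(\xi_k)(v)$, so dividing the chain by $t_k^2/2$ reduces it to $\langle w_k, v\rangle \ge 0$ for every $k$.

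Finally, since $\xi_k \to \bar x$, the local boundedness in Proposition~\ref{property}(ii) ensures that $\{w_k\}$ is bounded, and its closedness part forces every cluster point into $\partial^2 f(\bar x)(v)$. Extracting a convergent subsequence $w_k \to z$ and passing to the limit in $\langle w_k, v\rangle \ge 0$ produces the desired $z \in \partial^2 f(\bar x)(v)$ with $\langle z, v\rangle \ge 0$. The only delicate step is the positive-homogeneity rescaling $z_k = t_k w_k$: it is what produces the correct $t_k^2$ order in the Taylor remainder and, at the same time, guarantees that the limit vector lands in $\partial^2 f(\bar x)(v)$ for the original direction $v$ rather than for a vanishing argument $t_k v$.
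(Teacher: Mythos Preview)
The paper does not actually prove this statement; it is quoted without proof as a known result from \cite{ChieuLeeYen2017}. Your argument is correct and is exactly the unconstrained specialization of the technique the paper later uses to establish Theorem~\ref{main_theorem}(ii): apply the $C^{1,1}$ Taylor inequality of Theorem~\ref{Taylor_formula} along $[\bar x,\bar x+t_k v]$, kill the first-order term via $\nabla f(\bar x)=0$, rescale with Proposition~\ref{property}(i), and pass to a cluster point using the local boundedness and closedness in Proposition~\ref{property}(ii).
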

\begin{theorem} {\rm (see~\cite[Corollary 4.8]{ChieuLeeYen2017})}
	Let $\bar x\in \mathbb{R}^n$. If the following conditions~hold
	\par {\rm (i)} $\nabla f(\bar x)=0,$
	\par 
	{\rm (ii)} for each $v\in \mathbb{R}^n\setminus\{0\}$, $\langle z, v \rangle > 0$ for all $z\in \partial^2 f(\bar x)(v)$.
	
	Then $\bar x$ is a local unique optimal solution of~\eqref{unconstraint_problem}.
	
\end{theorem}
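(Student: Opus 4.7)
The plan is to prove the statement by contradiction, using the Taylor formula (Theorem~\ref{Taylor_formula}) together with the positive homogeneity and closedness-type properties collected in Proposition~\ref{property}. Suppose $\bar x$ is not a strict local minimizer. Then there exists a sequence $\{x_k\}\subset\mathbb{R}^n$ with $x_k\to\bar x$, $x_k\neq\bar x$, and $f(x_k)\le f(\bar x)$ for every $k$.

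Apply Theorem~\ref{Taylor_formula} on the segment $[\bar x,x_k]$: there are $\xi_k\in[\bar x,x_k]$ and $z_k\in \partial^2 f(\xi_k)(x_k-\bar x)$ such that
\[
\tfrac{1}{2}\langle z_k,\,x_k-\bar x\rangle \le f(x_k)-f(\bar x)-\langle\nabla f(\bar x),x_k-\bar x\rangle.
\]
Since $\nabla f(\bar x)=0$ by (i) and $f(x_k)\le f(\bar x)$, this yields $\langle z_k,x_k-\bar x\rangle\le 0$. Set $t_k:=\|x_k-\bar x\|>0$ and $v_k:=(x_k-\bar x)/t_k$, so $\|v_k\|=1$. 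Dividing the preceding inequality by $t_k^2$ gives
\[
\bigl\langle z_k/t_k,\,v_k\bigr\rangle \le 0.
\]
By the positive homogeneity in Proposition~\ref{property}(i), $z_k/t_k\in \partial^2 f(\xi_k)(v_k)$.

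Since $\{v_k\}$ lies in the unit sphere, extract a subsequence (no relabeling) with $v_k\to v$ and $\|v\|=1$. Because $\xi_k\to\bar x$ and $\{v_k\}$ is bounded, the local boundedness statement in Proposition~\ref{property}(ii) implies that $\{z_k/t_k\}$ is bounded, and a further subsequence gives $z_k/t_k\to \tilde z$ for some $\tilde z\in\mathbb{R}^n$. The closedness part of Proposition~\ref{property}(ii) then yields $\tilde z\in\partial^2 f(\bar x)(v)$. Passing to the limit in $\langle z_k/t_k,v_k\rangle\le 0$ gives $\langle\tilde z,v\rangle\le 0$, contradicting hypothesis (ii) applied to $v\neq 0$. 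Hence $\bar x$ is a strict (locally unique) minimizer of~\eqref{unconstraint_problem}.

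The main subtlety I would check carefully is step four: the statement of Proposition~\ref{property}(ii) as written only asserts closure along a single base point sequence $x_k\to\bar x$ with a fixed direction $v$; I need the slightly stronger closure that allows $v_k\to v$ as well. This should follow by an additional diagonal/continuity argument on the graphical outer limit of $\partial^2 f$, or directly from the expression $\partial^2 f(x)(v)=\partial\langle v,\nabla f\rangle(x)$ in~\eqref{ct2.1} combined with the robustness of the limiting subdifferential; in any case, this is the only nontrivial technical point in the argument, the rest being a standard blow-up/contradiction scheme.
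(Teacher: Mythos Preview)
The paper does not supply its own proof of this statement; it is quoted from \cite[Corollary~4.8]{ChieuLeeYen2017} as a preliminary result. So there is no ``paper's proof'' to compare against directly.

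That said, your argument is correct and is precisely the unconstrained specialization of the paper's own proof of Theorem~\ref{sufficient-theorem-1}: a contradiction sequence $x_k\to\bar x$ with $f(x_k)\le f(\bar x)$, the lower Taylor estimate from Theorem~\ref{Taylor_formula}, the homogeneity rescaling $z_k/t_k\in\partial^2 f(\xi_k)(v_k)$, and a compactness/closure pass to the limit to exhibit $\tilde z\in\partial^2 f(\bar x)(v)$ with $\langle\tilde z,v\rangle\le 0$. The paper's proof of Theorem~\ref{sufficient-theorem-1} follows the identical template (with the Lagrangian in place of $f$), so your approach is fully aligned with the paper's methodology.

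The subtlety you flag is genuine and you have the right fix: Proposition~\ref{property}(ii) as stated fixes $v$, whereas you need closure under $v_k\to v$. This follows immediately from the definition of $\partial^2 f(x)(v)$ as $D^*(\nabla f)(x)(v)$, i.e., $(z,-v)\in N((x,\nabla f(x)),\mathrm{gph}\,\nabla f)$, together with the outer semicontinuity of the limiting normal cone and the continuity of $\nabla f$. Equivalently, via \eqref{ct2.1}, the local Lipschitz constant $\ell$ of $\nabla f$ near $\bar x$ gives the uniform bound $\|w\|\le \ell\|v\|$ for every $w\in\partial^2 f(x)(v)$ with $x$ near $\bar x$, which handles both the boundedness of $\{z_k/t_k\}$ and (combined with robustness of the limiting subdifferential) the closure. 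Notably, the paper's own proof of Theorem~\ref{sufficient-theorem-1} uses this same passage with varying $v_k$ without spelling it out, so you are in fact being more careful here than the paper.
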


\section{Second-order necessary optimality conditions}\label{Section-3}
In the paper, we will study a general optimization problem. More precisely, the optimization problem considered is of the form
\begin{align}\tag{P1}
	\label{constraint_problem}
	\min \{ f(x) \mid x \in \mathbb{R}^n,  g_i(x) \le 0, i\in I, h_j(x)=0, j\in J\},
\end{align}
where $f , g_i, h_j \in C^{1,1}(\mathbb{R}^n)$, $I=\{1,2,...,m\}$, $J=\{1,2,...,p\}$. 
The feasible set is denoted~by
\begin{equation}\label{constraint-set}
	X := \{x\in \mathbb{R}^n \mid g_i(x) \le 0, \ i\in I, h_j(x)=0, j\in J\}.	
\end{equation}
Given $x \in X,$ we denote the {\em active index} set to $x$ by
$$I^0(x) := \{i\in I \mid g_i(x)=0\}$$
and denote by $C(x)$ the {\em  cone of critical directions}
\begin{align*}C(x) := \{ v \in \mathbb{R}^n \mid \langle \nabla f(x),v\rangle = 0,\ &\langle \nabla g_i(x),v\rangle \le 0, \ i\in I^0(x),\\
	&\langle \nabla h_j (x), v \rangle =0, \ j\in J\}.
\end{align*}

We invoke the notion of an isolated local solution of order $2$, as introduced by Auslender~\cite[Definition 2.1]{Auslender_1984}.

\begin{definition}\rm 
	We say that a feasible point $\bar x\in X$ is an \textit{isolated local solution of order~2} of~\eqref{constraint_problem} if there exist a positive constant $\rho$ and a neighborhood $U$ of $\bar x$
	such~that
	\begin{align}\label{ils_order 2}
		f (x)> f (\bar x)  +\frac{1}{2} \rho \|x-\bar x\|^2, \ \forall x\in X \cap U, \ x\not= \bar x.
	\end{align}
\end{definition}

Observe that if we can find a positive constant $\bar\rho$ and a neighborhood $U$ of $\bar x$ with
\begin{align}\label{ils_order 2a}
	f (x)\geq f (\bar x)  +\frac{1}{2} \bar\rho  \|x-\bar x\|^2, \ \forall x\in X \cap U
\end{align} then, for any fixed $\rho \in (0,\bar\rho)$,  condition~\eqref{ils_order 2} is fulfilled. Therefore,  a feasible point $\bar x$ of~\eqref{constraint_problem} is an isolated local solution of order~$2$ if and only if there exist a positive constant $\bar \rho$ and a neighborhood $V$ of $\bar x$
such that~\eqref{ils_order 2a} holds.	

\begin{definition}\rm  {\rm (see~\cite{Khanh-et al-24})}
	One says that the \textit{metrically subregular} (MSCQ) holds at $\bar x \in X$ if there exist a neighborhood $U$ of $\bar x$ and a number $\kappa>0$ such that the  following condition is satisfied
	\begin{equation}\label{MSCQ}
		\mathrm{dist}\,(x; X)\leq \kappa\left(\sum_{i\in I}\max\{g_i(x), 0\}+\sum_{j\in J}|h_j(x)|\right)\ \ \text{for all}\ \ x\in U.
	\end{equation} 
\end{definition}
In \cite[Proposition 9.1]{Khanh-et al-24}, by using \cite[Lemma 2.5]{Chieu-Hien-17} the author proved that if the MSCQ holds at $\bar x$, then the following {\it Abadie constraint qualification} (ACQ) is satisfied
$$T( \bar x, X)=\{ v\in \mathbb{
	R}^n \mid \, \langle \nabla g_i (\bar x), v \rangle \le 0,\,  i\in I^0(\bar x), \langle \nabla h_j (\bar x),\, v \rangle =0, j\in J\},$$
where $g_i$, $i\in I$, and $h_j$, $j\in J$, are $C^2$ smooth functions. In the next proposition, we show that this result is still true for the case where functions $g_i$, $i\in I$, and $h_j$, $j\in J$, are only differentiable at the point considered.     
\begin{proposition}\label{Pro-3.3} Assume that functions $g_i$, $i\in I$, and $h_j$, $j\in J$, are  differentiable at $\bar x\in X$.   If the MSCQ holds at $\bar x$, then so does the ACQ.
\end{proposition}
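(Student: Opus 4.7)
The plan is to prove the two inclusions separately. Let $L(\bar x) := \{v\in\mathbb{R}^n \mid \langle \nabla g_i(\bar x),v\rangle \le 0 \text{ for } i\in I^0(\bar x),\ \langle \nabla h_j(\bar x),v\rangle = 0 \text{ for } j\in J\}$ denote the linearization cone. The inclusion $T(\bar x,X)\subseteq L(\bar x)$ is standard and uses only differentiability at $\bar x$: given $v\in T(\bar x,X)$, pick $t_k\to 0^+$ and $v_k\to v$ with $\bar x+t_k v_k\in X$. For $i\in I^0(\bar x)$ the differentiability of $g_i$ at $\bar x$ gives
\begin{equation*}
0\ \ge\ g_i(\bar x+t_k v_k)-g_i(\bar x)\ =\ t_k\langle\nabla g_i(\bar x),v_k\rangle + o(t_k),
\end{equation*}
and dividing by $t_k$ and passing to the limit yields $\langle\nabla g_i(\bar x),v\rangle\le 0$; the same argument applied to $\pm h_j$ gives $\langle\nabla h_j(\bar x),v\rangle=0$.

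The nontrivial inclusion is $L(\bar x)\subseteq T(\bar x,X)$, and this is where MSCQ will do the work. Fix $v\in L(\bar x)$ and any sequence $t_k\to 0^+$; set $x_k:=\bar x+t_k v$, so $x_k\to\bar x$ and hence $x_k\in U$ eventually. By differentiability at $\bar x$, for each $i\in I$,
\begin{equation*}
g_i(x_k)\ =\ g_i(\bar x)+t_k\langle\nabla g_i(\bar x),v\rangle+o(t_k),
\end{equation*}
and an analogous expansion holds for $h_j$. For $i\notin I^0(\bar x)$ we have $g_i(\bar x)<0$, so by continuity $\max\{g_i(x_k),0\}=0$ for large $k$. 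For $i\in I^0(\bar x)$ the expansion reads $g_i(x_k)=t_k\langle\nabla g_i(\bar x),v\rangle+o(t_k)\le o(t_k)$, hence $\max\{g_i(x_k),0\}=o(t_k)$. Since $h_j(\bar x)=0$ and $\langle\nabla h_j(\bar x),v\rangle=0$ we also obtain $|h_j(x_k)|=o(t_k)$.

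Applying \eqref{MSCQ} to $x_k$, we find $y_k\in X$ with
\begin{equation*}
\|x_k-y_k\|\ \le\ \mathrm{dist}(x_k;X)+t_k^2\ \le\ \kappa\Bigl(\sum_{i\in I}\max\{g_i(x_k),0\}+\sum_{j\in J}|h_j(x_k)|\Bigr)+t_k^2\ =\ o(t_k)
\end{equation*}
(one can equivalently just take $y_k$ to realize the distance up to a $t_k^2$ error or note that the distance itself is an infimum, so any sequence of near-projections works). Setting $v_k:=(y_k-\bar x)/t_k$, we compute $v_k=v+(y_k-x_k)/t_k\to v$, and $\bar x+t_k v_k=y_k\in X$, so by definition $v\in T(\bar x,X)$.

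The main obstacle, such as it is, is simply being careful that the linear estimates $\max\{g_i(x_k),0\}=o(t_k)$ and $|h_j(x_k)|=o(t_k)$ only require first-order differentiability at $\bar x$ (not $C^1$ or $C^2$); this is precisely the refinement over \cite{Khanh-et al-24} that the proposition advertises. The only other minor point is that the original MSCQ inequality is stated with the distance function, so one must pass from $\mathrm{dist}(x_k;X)$ to an actual feasible point $y_k$, which is routine.
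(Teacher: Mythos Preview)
Your proof is correct and follows essentially the same route as the paper: show the easy inclusion $T(\bar x,X)\subseteq L(\bar x)$ by first-order expansion, then for the reverse inclusion use MSCQ to conclude that $\mathrm{dist}(\bar x+t_k v;X)=o(t_k)$ and pass to a nearby feasible sequence. The only cosmetic differences are that the paper works with an arbitrary approximating sequence $v_k\to v$ (which is not needed, since tangency only requires one such sequence) and splits $I^0(\bar x)$ into the subcases $\langle\nabla g_i(\bar x),v\rangle<0$ versus $=0$, whereas you handle both at once via $\max\{g_i(x_k),0\}\le |o(t_k)|$; your near-projection with slack $t_k^2$ is in fact slightly cleaner than the paper's appeal to closedness of $X$.
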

\begin{proof} Let $U$ be a neighborhood of $\bar x$ and $\kappa $ be a positive number such that~\eqref{MSCQ} is satisfied.  Put 
	$$L(\bar x, X):=\{ v\in \mathbb{
		R}^n \mid \, \langle \nabla g_i (\bar x), v \rangle \le 0,\,  i\in I^0(\bar x), \langle \nabla h_j (\bar x), v \rangle =0, \, j\in J\}.$$ 
	By definition, it is easy to see that $T(\bar x, X)\subset L(\bar x, X)$. We now show that $L(\bar x, X)~\subset~T(\bar x, X)$ and so the ACQ holds at $\bar x$. Let any $v\in L(\bar x, X)$ and consider arbitrary sequences $t_k\to 0^+$, $v_k\to v$. For each $k\in\mathbb{N}$, put $x_k:=\bar x+t_kv_k$. Since $x_k\to \bar x$ as $k\to\infty$, without any loss of generality we may assume that $x_k\in U$ for all $k\in\mathbb{N}$. Let us denote 
	$$I^0(\bar x, v):=\{i\in I^0(\bar x)\,\mid\, \langle\nabla g_i(\bar x), v\rangle=0\}.$$
	We claim that 
	\begin{equation}\label{extra-equa-1}
		g_i(x_k)< 0 \ \ \forall i\in I\setminus I^0(\bar x, v)   
	\end{equation}
	and $k$ large enough. Indeed, if $i\notin I^0(\bar x)$, then $g_i(\bar x)<0$ and so $g_i(x_k)< 0$ for all $k$ large enough due to the continuity of $g_i$ and the fact that $x_k\to \bar x$ as $k\to\infty$. If $i\in I^0(\bar x)\setminus I^0(\bar x, v)$, then $g_i(\bar x)=0$ and $\langle\nabla g_i(\bar x), v\rangle<0$. By the differentiability of $g_i$ at $\bar x$, we have
	$$g_i(x_k)=g_i(x_k)-g_i(\bar x)=t_k\langle\nabla g_i(\bar x), v_k\rangle+o(t_k),$$
	with $o(t_k)/t_k\to 0$ as $k\to\infty$. Hence
	$$\lim_{k\to\infty}\frac{g_i(x_k)}{t_k}=\langle\nabla g_i(\bar x), v\rangle<0$$
	and we therefore get $g_i(x_k)<0$ for all $k$ large enough, as required. Without any loss of generality, we assume that \eqref{extra-equa-1} holds for all $k\in\mathbb{N}$. This and the MSCQ at $\bar x$ imply~that
	\begin{equation*}
		\mathrm{dist}\,(x_k, X)\leq \kappa\left(\sum_{i\in I^0(\bar x, v)}\max\{g_i(x_k), 0\}+\sum_{j\in J}|h_j(x_k)|\right)\ \ \forall k\in\mathbb{N}.
	\end{equation*}
	By the differentiability of $g_i$ and $h_j$ at $\bar x$, one has
	\begin{align*}
		g_i(x_k)&= t_k\langle\nabla g_i(\bar x), v_k\rangle+o(t_k), \ \ i\in I^0(\bar x,v),
		\\
		h_j(x_k)&=t_k\langle \nabla h_j(\bar x), v_k\rangle +o(t_k),\ \ j\in J.
	\end{align*}
	This implies that 
	\begin{align*}
		\lim_{k\to\infty}\frac{g_i(x_k)}{t_k}&= \langle\nabla g_i(\bar x), v\rangle=0, \ \ \forall i\in I^0(\bar x,v),
		\\
		\lim_{k\to\infty}\frac{h_j(x_k)}{t_k}&=\langle \nabla h_j(\bar x), v\rangle=0,\ \ \forall j\in J.
	\end{align*}
	Hence
	\begin{equation*}
		\lim_{k\to\infty}\frac{\mathrm{dist}\,(x_k, X)}{t_k}=0.
	\end{equation*}
	This and the closedness of $X$ imply that there exists a sequence $z_k$ in $X$ such~that
	\begin{equation*}
		\lim_{k\to\infty}\frac{\|z_k-x_k\|}{t_k}=0.
	\end{equation*}
	For each $k\in\mathbb{N}$, put $w_k:=(z_k-x_k)/t_k$, Then $w_k\to 0$ and we have 
	$$z_k=x_k+t_kw_k=\bar x+t_k(v_k+w_k).$$
	Since $z_k\in X$, $t_k\to 0^+$, and $v_k+w_k\to v$, one has $v\in T(\bar x, X)$. Thus, we have $L(\bar x, X)\subset T(\bar x, X)$, as required. 
\end{proof}
The Lagrangian associated with the constrained problem~\eqref{constraint_problem} is given by
\begin{align*}
	\mathcal{L} (x, \lambda, \mu)=f(x)+\sum\limits_{i\in I} \lambda_i g_i (x)+ \sum\limits_{j\in J} \mu_j h_j(x).
\end{align*}
\begin{definition}\rm Let $\bar x\in X$. The set of \textit{Lagrange multipliers} of problem~\eqref{constraint_problem} at $\bar x$ is defined by
	$$\Lambda (\bar x):=\{(\bar\lambda,\bar\mu)\in\mathbb{R}^m_+\times\mathbb{R}^p\,\mid\, \nabla_x\mathcal{L}(\bar x, \bar\lambda, \bar\mu)=0, \bar \lambda_i g_i(\bar x)=0, \ i\in I\}.$$
	If $\Lambda (\bar x)\neq \emptyset$, then $\bar x$ is called a {\em stationary point} of \eqref{constraint_problem}.  	
\end{definition}

The following theorem provides second-order necessary optimality conditions for problem~\eqref{constraint_problem} in terms of limiting second-order subdifferentials.
{ 
	\begin{theorem}\label{main_theorem}
		Suppose that $\bar x\in X$ is a local solution of~\eqref{constraint_problem} and  the MSCQ holds at $\bar x$.  Then, the following assertions hold:
		\par {\rm (i)} $\langle\nabla f(\bar x), v \rangle \ge 0$ for any $v\in T(\bar x, X).$ Consequently, $\Lambda (\bar x)$ is nonempty. 
		\par {\rm (ii)}	Let $(\bar\lambda, \bar\mu)\in \Lambda(\bar x)$. Then for each $v\in T(\bar x, X^0)$, there exists $z\in\partial^2 \mathcal{L}(\bar x, \bar\lambda, \bar \mu) (v)$ such~that 
		\begin{equation}\label{equa-2-new}
			\langle z, v \rangle \ge 0,
		\end{equation}
		where 
		\begin{align*}X^0=\{x\in \mathbb{R}^n \mid h_j(x)=0, \ j\in J, \  &g_i(x)=0 \ \mbox{if} \ i\in I^0(\bar x) \ \mbox{and} \ \bar\lambda_i>0, 
			\\
			&\quad\quad\quad\quad\quad\quad g_i(x)\le 0, \ \mbox{otherwise}\}.
		\end{align*}
		\par {\rm (iii)} If $\bar x$ is an isolated local solution of order $2$  of \eqref{constraint_problem} and $(\bar\lambda, \bar\mu)\in \Lambda(\bar x)$, then,	for every $v\in T(\bar x, X^0)\setminus\{0\}$, there exists $z\in \partial^2 \mathcal{L}(\bar x, \bar\lambda, \bar \mu) (v)$ such~that $$\langle z, v \rangle >0.$$
	\end{theorem}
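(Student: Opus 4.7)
The plan is to handle the three parts in order, with the main tool for (ii) and (iii) being the mean-value inequality of Theorem~\ref{Taylor_formula} applied to the Lagrangian $\mathcal{L}(\cdot,\bar\lambda,\bar\mu)\in C^{1,1}(\mathbb{R}^n)$ along segments $[\bar x, x_k]$ with $x_k$ tangent to $X^0$. For (i), I would unwind the definition of the contingent cone: given $v\in T(\bar x, X)$, choose $t_k\to 0^+$ and $v_k\to v$ with $x_k:=\bar x+t_k v_k \in X$; local minimality of $\bar x$ together with the first-order expansion $f(x_k)=f(\bar x)+t_k\langle \nabla f(\bar x), v_k\rangle + o(t_k)$, divided by $t_k$, gives $\langle \nabla f(\bar x), v\rangle \geq 0$. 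To produce multipliers I would invoke Proposition~\ref{Pro-3.3} to identify $T(\bar x, X)$ with the linearized cone (legitimate thanks to MSCQ), so the inequality propagates to all linearized directions; a Farkas-type alternative then delivers $\bar\lambda\in\mathbb{R}^m_+$ supported on $I^0(\bar x)$ and $\bar\mu\in\mathbb{R}^p$ with $\nabla_x\mathcal{L}(\bar x,\bar\lambda,\bar\mu)=0$ and complementary slackness, i.e., $(\bar\lambda,\bar\mu)\in \Lambda(\bar x)$.

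For (ii), fix $(\bar\lambda,\bar\mu)\in \Lambda(\bar x)$ and $v\in T(\bar x, X^0)$, and pick $t_k\to 0^+$, $v_k\to v$ with $x_k:=\bar x+t_k v_k\in X^0$. The design of $X^0$ combined with complementary slackness gives $\mathcal{L}(x_k,\bar\lambda,\bar\mu)=f(x_k)$ and $\mathcal{L}(\bar x,\bar\lambda,\bar\mu)=f(\bar x)$, since on $X^0$ every term $\bar\lambda_i g_i(x_k)$ and every $\bar\mu_j h_j(x_k)$ vanishes. Moreover $X^0\subseteq X$, so local minimality of $\bar x$ transfers to $\mathcal{L}(x_k,\bar\lambda,\bar\mu)\geq \mathcal{L}(\bar x,\bar\lambda,\bar\mu)$ for large $k$. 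Applying Theorem~\ref{Taylor_formula} on $[\bar x,x_k]$ and using $\nabla_x\mathcal{L}(\bar x,\bar\lambda,\bar\mu)=0$ produces $\xi_k\in[\bar x,x_k]$ and $z_k\in \partial^2\mathcal{L}(\xi_k,\bar\lambda,\bar\mu)(x_k-\bar x)$ with
\[
0\leq \mathcal{L}(x_k,\bar\lambda,\bar\mu)-\mathcal{L}(\bar x,\bar\lambda,\bar\mu)\leq \tfrac{1}{2}\langle z_k, x_k-\bar x\rangle.
\]
Positive homogeneity (Proposition~\ref{property}(i)) rewrites $z_k=t_k\tilde z_k$ with $\tilde z_k\in\partial^2\mathcal{L}(\xi_k,\bar\lambda,\bar\mu)(v_k)$; dividing by $t_k^2$ yields $\langle\tilde z_k, v_k\rangle\geq 0$. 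The local Lipschitz property of $\nabla_x\mathcal{L}$ bounds $\{\tilde z_k\}$, and outer semicontinuity of the limiting second-order subdifferential furnishes a subsequential limit $\tilde z_k\to z\in\partial^2\mathcal{L}(\bar x,\bar\lambda,\bar\mu)(v)$, so $\langle z,v\rangle\geq 0$.

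For (iii), I would rerun the same scheme under the quadratic lower bound $f(x_k)\geq f(\bar x)+\tfrac{\bar\rho}{2}\|x_k-\bar x\|^2$ provided by isolatedness of order~2. Since $\mathcal{L}(x_k,\bar\lambda,\bar\mu)-\mathcal{L}(\bar x,\bar\lambda,\bar\mu)=f(x_k)-f(\bar x)$ on $X^0$, the Taylor chain upgrades to $\tfrac{\bar\rho}{2}t_k^2\|v_k\|^2\leq \tfrac{t_k^2}{2}\langle \tilde z_k, v_k\rangle$, and passage to the limit yields $\langle z,v\rangle\geq \bar\rho\|v\|^2>0$ whenever $v\neq 0$. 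The main delicate point, shared by (ii) and (iii), is the limit step $\tilde z_k\in\partial^2\mathcal{L}(\xi_k,\bar\lambda,\bar\mu)(v_k)\Rightarrow z\in\partial^2\mathcal{L}(\bar x,\bar\lambda,\bar\mu)(v)$ with $v_k$ varying: this is not literally covered by Proposition~\ref{property}(ii) as stated, but is handled by rewriting the inclusion as $(\tilde z_k,-v_k)\in N\bigl((\xi_k,\nabla_x\mathcal{L}(\xi_k,\bar\lambda,\bar\mu)),\gph\nabla_x\mathcal{L}(\cdot,\bar\lambda,\bar\mu)\bigr)$ and invoking closedness of the limiting normal cone together with the continuity of $\nabla_x\mathcal{L}(\cdot,\bar\lambda,\bar\mu)$.
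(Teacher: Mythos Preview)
Your argument is correct in all three parts. Part~(i) mirrors the paper exactly. For (ii) and (iii) your route differs from the paper's in one respect: you apply Theorem~\ref{Taylor_formula} on the segment $[\bar x, x_k]$ with $x_k=\bar x+t_k v_k$, obtaining $\tilde z_k\in\partial^2\mathcal{L}(\xi_k,\bar\lambda,\bar\mu)(v_k)$ with a \emph{varying} direction $v_k$, and then pass to the limit via the graph-closedness of the limiting normal cone. The paper instead applies Theorem~\ref{Taylor_formula} on $[\bar x,\bar x+t_k v]$ with the \emph{fixed} direction $v$, obtaining $w_k\in\partial^2\mathcal{L}(\xi_k,\bar\lambda,\bar\mu)(v)$, and absorbs the discrepancy $B:=\mathcal{L}(x_k,\bar\lambda,\bar\mu)-\mathcal{L}(\bar x+t_k v,\bar\lambda,\bar\mu)$ into a remainder shown to be $o(t_k^2)$ via the classical mean value theorem and the Lipschitz property of $\nabla_x\mathcal{L}$. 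The paper's detour lets it invoke Proposition~\ref{property}(ii) verbatim (direction held fixed), whereas your shorter argument needs the full outer semicontinuity of the coderivative in both base point and direction---a standard fact, and correctly justified in your last paragraph. Both approaches are sound; yours is more economical, the paper's stays strictly within the tools it has explicitly stated.
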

	\begin{proof}
		\par {\rm (i)} Let $v\in T(\bar x, X)$. Then, by definition, there exist  a sequence of positive scalars $t_k \to 0^+$ and a sequence of vectors $v_k\to v$  such that 	$x_k:= \bar x +t_k v_k\in X$ for all $k\in\mathbb{N}$. Since $\bar x$ is a local solution of \eqref{constraint_problem} and $x_k\to \bar x$ as $k\to\infty$, we have $f(x_k)\geq f(\bar x)$ for all $k$ large enough. By the differentiability of $f$ at $\bar x$, one has
		\begin{equation*}
			0\leq f(x_k)-f(\bar x)=t_k\langle\nabla f(\bar x), v_k\rangle+ o(t_k),
		\end{equation*}
		with $o(t_k)/t_k\to 0$ as $k\to\infty$. Dividing two sides by $t_k$ and letting $k\to\infty$ we obtain $\langle\nabla f(\bar x), v \rangle \ge 0$. Now by the MSCQ holds at $\bar x$, one has $\langle\nabla f(\bar x), v \rangle \ge 0$ for all $v$ satisfies
		$$\langle \nabla g_i (\bar x), v \rangle \le 0,\  i\in I^0(\bar x), \langle \nabla h_j (\bar x), v \rangle =0,\ \ j\in J.$$
		This means that the following system 
		$$\langle\nabla f(\bar x), v \rangle<  0, \langle\nabla g_i (\bar x), v \rangle \le 0,\  i\in I^0(\bar x), \langle \nabla h_j (\bar x), v \rangle =0,\ \ j\in J$$
		has no solution $v\in\mathbb{R}^n$. Hence, by the Motzkin's theorem of the alternative (see \cite[pp. 28--29]{Mangasarian}), there exist  $\bar\lambda_i\geq 0$, $i\in I^0(\bar x)$, and $\bar\mu_j\in\mathbb{R}$, $j\in J$, such~that  
		$$\nabla f(\bar x)+\sum_{i\in I^0(\bar x)}\bar \lambda_i\nabla g_i(\bar x)+\sum_{j\in J}\bar\mu_j\nabla h_j(\bar x)=0.$$
		For $i\in I\setminus I^0(\bar x)$, put $\bar\lambda_i=0$ and $\bar\lambda:=(\bar\lambda_i)_{i\in I}$, $\bar\mu:=(\mu_j)_{j\in J}$. Then $(\bar\lambda,\bar\mu)\in \Lambda(\bar x)$, as required.
		
		\par (ii) Suppose that $\bar x$ is a local minimum of problem \eqref{constraint_problem} and $(\bar\lambda, \bar\mu)\in \Lambda(\bar x)$. Let any $v\in T(\bar x, X^0)$. Since $\bar x$ is a local minimum of~\eqref{constraint_problem}, there exists a  neighborhood $U$ of $\bar x$ such that 
		$$f(x)-f(\bar x) \ge 0, \ \forall x\in X\cap U.$$
		As $v\in T(\bar x, X^0)$, there exist  a sequence of positive scalars $t_k \to 0^+$ and a sequence of vectors $v_k\to v$  such that 
		$x_k:= \bar x +t_k v_k$ 		belongs to $ X^0 \cap U$ for all $k\in \mathbb{N}.$ Since $ x_k\in X^0$, we have $\bar{\lambda_i} g_i(x_k)=0$ for $i\in I^0(\bar x)$ and $h_j(x_k)=0$ for $j\in J$. Hence
		\begin{equation*}
			\mathcal{L} (x_k, \bar \lambda, \bar \mu)- \mathcal{L} (\bar x, \bar \lambda, \bar \mu)= f(x_k)-f(\bar x) \ge 0.
		\end{equation*}
		Combining this with the fact that $\nabla_x \mathcal{L}(\bar x, \bar \lambda, \bar \mu)=0$ one has
		\begin{align}\label{Taylor_expres_2n} \nonumber
			0&\le 	\mathcal{L} (x_k, \bar \lambda, \bar \mu)- \mathcal{L} (\bar x, \bar \lambda, \bar \mu)\\ \nonumber
			&= [ \mathcal{L} (x_k, \bar \lambda, \bar \mu) -\mathcal{L} (\bar x + t_k v, \bar \lambda, \bar \mu)] +[\mathcal{L} (\bar x + t_k v, \bar \lambda, \bar \mu)- \mathcal{L} (\bar x, \bar \lambda, \bar \mu) ]\\ \nonumber
			& = [ \mathcal{L} (x_k, \bar \lambda, \bar \mu) -\mathcal{L} (\bar x + t_k v, \bar \lambda, \bar \mu)] +[\mathcal{L} (\bar x + t_k v, \bar \lambda, \bar \mu)- \mathcal{L} (\bar x, \bar \lambda, \bar \mu) \\
			&  \quad \quad \quad\quad \quad \quad \quad \quad \quad \quad \quad \quad \quad \quad \quad \quad \quad \quad  \quad  -  \langle \nabla_x \mathcal{L}(\bar x, \bar \lambda, \bar \mu) , t_k v \rangle].
		\end{align} 
		Setting 
		$$A= \mathcal{L} (\bar x + t_k v, \bar \lambda, \bar \mu)- \mathcal{L} (\bar x, \bar \lambda, \bar \mu) -  \langle \nabla_x \mathcal{L}(\bar x, \bar \lambda, \bar \mu) , t_k v \rangle $$
		and 
		$$B= \mathcal{L} (x_k, \bar \lambda, \bar \mu) -\mathcal{L} (\bar x + t_k v, \bar \lambda, \bar \mu).$$
		By applying the Taylor formula (Theorem~\ref{Taylor_formula}) we can find $z_k\in \partial^2 \mathcal{L} (\xi_k, \bar \lambda, \bar \mu) (t_k v)$ such that 
		\begin{align}\label{Taylor_expres_3}
			A \le  \dfrac{1}{2} \langle z_k, t_k v\rangle,
		\end{align}
		where
		$	\xi_k \in [\bar x, \bar x+t_k v].$
		From Proposition~\ref{property} (i),  $z_k\in \partial^2 \mathcal{L} (\xi_k, \bar \lambda, \bar \mu) (t_k v)$ means that $z_k\in t_k \partial^2 \mathcal{L} (\xi_k, \bar \lambda, \bar \mu) ( v).$
		The latter allows us to express $z_k= t_k  w_k$, where $w_k\in \partial^2 \mathcal{L} (\xi_k, \bar \lambda, \bar \mu) ( v).$ Thus,~\eqref{Taylor_expres_3} can be rewritten as
		\begin{align}\label{Taylor_expres_4}
			A \le  \dfrac{1}{2}t_k^2 \langle w_k, v\rangle.
		\end{align}
		For each $k$, the function $\varphi (t):= \mathcal L ((1-t)\bar x +t x_k, \bar \lambda, \bar \mu)$, where $t\in [0,1]$ is the composition function of the affine function $t\mapsto (1-t) \bar x +tx_k$ defined on $[0,1]$ and the vector function $x\mapsto \mathcal{L} (x, \bar \lambda, \bar \mu)$ defined on $\mathbb{R}^n$. Under our assumptions, we have $\mathcal{L} (\cdot, \bar \lambda, \bar \mu) \in C^{1,1} (\mathbb{R}^n)$. By using the classical mean value theorem (see, e.g., \cite[Theorem~5.10]{Rudin_PMA1976}) to $\varphi$ on $[0,1]$ and using the chain rule (see, e.g., \cite[p.~103]{Lang_1983}), we find $\zeta_k\in (\bar x+t_k v, x_k)$ such that
		\begin{equation*}
			B=	\mathcal{L}(x_k,\bar\lambda, \bar \mu )  -\mathcal{L}(\bar x+t_k v, \bar\lambda, \bar \mu) = \langle \nabla_x \mathcal{L}(\zeta_k,\bar\lambda, \bar \mu ), x_k -(\bar x +t_k v) \rangle.
		\end{equation*}
		The later is equivalent to
		\begin{equation*}
			B= \langle \nabla_x \mathcal{L}(\zeta_k,\bar\lambda, \bar \mu ), t_k(v_k -v) \rangle,
		\end{equation*}
		and, hence
		\begin{align}\label{B_express} \nonumber
			|B|&= |\langle \nabla_x \mathcal{L}(\zeta_k,\bar\lambda, \bar \mu ), t_k(v_k -v) \rangle| \\ \nonumber
			&= |\langle \nabla_x \mathcal{L}(\zeta_k,\bar\lambda, \bar \mu ) -   \nabla_x \mathcal{L}(\bar x,\bar\lambda, \bar \mu ), t_k(v_k -v) \rangle| \\ 
			& \le \|  \nabla_x \mathcal{L}(\zeta_k,\bar\lambda, \bar \mu )-   \nabla_x \mathcal{L}(\bar x,\bar\lambda, \bar \mu )\|. \| t_k (v_k-v)\|.
		\end{align}
		Since $\mathcal{L} (\cdot, \bar \lambda, \bar \mu) \in C^{1,1} (\mathbb{R}^n)$, we can find a positive constant $\ell$ such that 
		\begin{align}\label{Lipschit_L}\nonumber
			\| \nabla_x \mathcal{L}(\zeta_k,\bar\lambda, \bar \mu )- \nabla_x \mathcal{L}(\bar x,\bar\lambda, \bar \mu )\| &\le \ell \| \zeta_k -\bar x\| \\ \nonumber
			& \le \ell [\|\zeta_k -x_k \| +\|x_k-\bar x\|]\\
			& = \ell t_k[\|v_k-v\| +\|v_k\|], \ \ k\in \mathbb{N}.
		\end{align}
		Combining~\eqref{B_express} and~\eqref{Lipschit_L} yields
		\begin{equation*} 
			|B|\le \ell t_k^2[\|v_k-v\| +\|v_k\|]. \|v_k-v\|.
		\end{equation*}
		This implies that 
		\begin{equation}\label{B_express_1}
			\dfrac{|B|}{t_k^2} \to 0 \ \mbox{as}  \ k\to \infty.
		\end{equation}
		From~\eqref{Taylor_expres_2n} and~\eqref{Taylor_expres_4} we get
		\begin{align*}
			0\le A +B \le  \dfrac{1}{2}t_k^2 \langle w_k, v\rangle+B.
		\end{align*}
		Dividing both sides by $t_k^2$ we obtain
		\begin{align*}
			0\le  \dfrac{1}{2} \langle w_k, v\rangle+\dfrac{B}{t_k^2}.
		\end{align*}
		Since $\partial^2 \mathcal L (., \bar \lambda, \bar \mu)(v) $ is locally bounded at $\bar x$, and $\xi_k \to \bar x$, we deduce that $\{w_k\}$ is bounded. Thus, we can assume, without any loss of generality, that $w_k$ converges to some $z\in\partial^2 \mathcal{L} (\bar x, \bar \lambda, \bar \mu)$, and, hence
		$$0\le \lim\limits_{k\to \infty}  \dfrac{1}{2} \langle w_k, v\rangle+\dfrac{B}{t_k^2} = \dfrac{1}{2} \langle z, v\rangle$$
		due to~\eqref{B_express_1}. In other words, we can find $z\in\partial^2 \mathcal{L} (\bar x, \bar \lambda, \bar \mu)$ such that
		$$\langle z, v \rangle  \ge 0.$$
		
		(iii) Suppose that $\bar x$ is an isolated local solution of order $2$ of \eqref{constraint_problem} and $(\bar\lambda, \bar\mu)\in \Lambda(\bar x)$. Let $\bar\rho$ be a positive number  and $U$ be a neighborhood  of $\bar x$ such that  
		\begin{equation*} 
			f(x)-f(\bar x)\geq \bar\rho\|x-\bar x\|^2, \ \ \forall x\in X\cap U.
		\end{equation*}
		For $v\in T(\bar x, X^0)\setminus\{0\}$, there exist  a sequence of positive scalars $t_k \to 0^+$ and a sequence of vectors $v_k\to v$  such that 
		$x_k:= \bar x +t_k v_k$ 		belongs to $ X^0 \cap U$ for all $k\in \mathbb{N}.$ Hence
		\begin{equation*} 
			\mathcal{L} (x_k, \bar \lambda, \bar \mu)- \mathcal{L} (\bar x, \bar \lambda, \bar \mu)= f(x_k)-f(\bar x) \ge \bar\rho t_k^2\|v_k\|^2.
		\end{equation*}
		By this and a similar argument as in the proof of part (ii), we see that there exists $z\in\partial^2\mathcal{L}(\bar x, \bar\lambda, \bar\mu)$ such that
		$$\frac{1}{2}\langle z, v\rangle\geq \bar\rho\|v\|^2>0,$$  
		as required.    
	\end{proof}
}
\begin{remark}\rm 
	Feng and Li \cite{Feng_Li_2020} investigated second-order necessary conditions for optimization problems involving set constraints and finite inequality constraints, employing the concept of limiting second-order subdifferentials. Our approach differs significantly from that of \cite{Feng_Li_2020}.
	Firstly, we focus on optimization problems with finite inequality and equality constraints. Secondly, we employ the Lagrangian method and utilize information from the first-order tangent set, whereas the optimality conditions in \cite{Feng_Li_2020} rely on information from the second-order tangent set.
	Especially in Theorem~\ref{main_theorem}, we study the second-order necessary optimality conditions for the isolated solution of order 2 as well.	
\end{remark}

\section{Second-order sufficient  optimality conditions}\label{Section-4}
Let us now turn our attention to second-order sufficient optimality conditions.

\begin{theorem}\label{sufficient-theorem-1} Let $\bar x$ be a stationary point of~\eqref{constraint_problem} and $(\bar\lambda, \bar \mu) \in\Lambda(\bar x) $. Assume that for every $v\in [T(\bar x, X)\cap C(\bar x)]\setminus \{0\}$  and for any $z\in \partial^2 \mathcal{L}(\bar x,\bar \lambda, \bar \mu ) (v) $ one~has 
	\begin{align*}
		\big	\langle z, v \big\rangle > 0.
	\end{align*}
	Then $\bar x$ is an isolated local solution of order $2$ of~\eqref{constraint_problem}.
\end{theorem}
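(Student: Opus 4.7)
I would argue by contradiction. Suppose $\bar x$ is not an isolated local solution of order $2$; then by negating the definition (with $\rho_k = 1/k$ on shrinking neighborhoods), I can produce a sequence $\{x_k\}\subset X$ with $x_k\neq\bar x$, $x_k\to\bar x$, and
\begin{align*}
f(x_k)-f(\bar x)\le \tfrac{1}{2k}\|x_k-\bar x\|^2.
\end{align*}
Setting $t_k:=\|x_k-\bar x\|\to 0^+$ and $v_k:=(x_k-\bar x)/t_k$ gives $\|v_k\|=1$; passing to a subsequence, $v_k\to v$ with $\|v\|=1$. From $x_k=\bar x+t_kv_k\in X$ I obtain $v\in T(\bar x,X)\setminus\{0\}$.

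The first goal is to place $v$ in the critical cone $C(\bar x)$. Using the differentiability of $g_i$ and $h_j$ at $\bar x$ and the sign information $g_i(x_k)\le 0=g_i(\bar x)$ for $i\in I^0(\bar x)$, $h_j(x_k)=0$ for $j\in J$ (exactly as in the proof of Theorem~\ref{main_theorem}(i)), I obtain $\langle\nabla g_i(\bar x),v\rangle\le 0$ and $\langle\nabla h_j(\bar x),v\rangle=0$. Dividing the inequality above by $t_k$ and using the differentiability of $f$ at $\bar x$ yields $\langle\nabla f(\bar x),v\rangle\le 0$. On the other hand, since $(\bar\lambda,\bar\mu)\in\Lambda(\bar x)$, the stationarity identity $\nabla f(\bar x)=-\sum_{i\in I^0(\bar x)}\bar\lambda_i\nabla g_i(\bar x)-\sum_{j\in J}\bar\mu_j\nabla h_j(\bar x)$ combined with $\bar\lambda_i\ge 0$ forces $\langle\nabla f(\bar x),v\rangle\ge 0$. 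Hence $\langle\nabla f(\bar x),v\rangle=0$ and $v\in[T(\bar x,X)\cap C(\bar x)]\setminus\{0\}$.

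The heart of the argument is a second-order Taylor expansion for $\mathcal{L}(\cdot,\bar\lambda,\bar\mu)\in C^{1,1}(\mathbb{R}^n)$ on $[\bar x,x_k]$. Since $x_k\in X$ implies $\bar\lambda_ig_i(x_k)\le 0$ and $h_j(x_k)=0$, while $\bar\lambda_ig_i(\bar x)=0$ and $h_j(\bar x)=0$, I get
\begin{align*}
\mathcal{L}(x_k,\bar\lambda,\bar\mu)-\mathcal{L}(\bar x,\bar\lambda,\bar\mu)\le f(x_k)-f(\bar x)\le \tfrac{1}{2k}t_k^2.
\end{align*}
Applying Theorem~\ref{Taylor_formula} and using $\nabla_x\mathcal{L}(\bar x,\bar\lambda,\bar\mu)=0$ supplies $\xi_k\in[\bar x,x_k]$ and $z_k'\in\partial^2\mathcal{L}(\xi_k,\bar\lambda,\bar\mu)(x_k-\bar x)$ with $\tfrac{1}{2}\langle z_k',x_k-\bar x\rangle\le\mathcal{L}(x_k,\bar\lambda,\bar\mu)-\mathcal{L}(\bar x,\bar\lambda,\bar\mu)\le\tfrac{1}{2k}t_k^2$. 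Writing $z_k'=t_kw_k$ with $w_k\in\partial^2\mathcal{L}(\xi_k,\bar\lambda,\bar\mu)(v_k)$ via Proposition~\ref{property}(i) gives $\langle w_k,v_k\rangle\le 1/k$.

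It remains to pass to the limit. Since $\nabla_x\mathcal{L}(\cdot,\bar\lambda,\bar\mu)$ is Lipschitz on some neighborhood of $\bar x$ with a constant $\ell$, the function $x\mapsto\langle v_k,\nabla_x\mathcal{L}(x,\bar\lambda,\bar\mu)\rangle$ is Lipschitz there with constant $\ell\|v_k\|=\ell$, so every element of $\partial^2\mathcal{L}(\xi_k,\bar\lambda,\bar\mu)(v_k)=\partial\langle v_k,\nabla_x\mathcal{L}(\cdot,\bar\lambda,\bar\mu)\rangle(\xi_k)$ has norm at most $\ell$, and $\{w_k\}$ admits a convergent subsequence $w_k\to z$. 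Taking limits in $\langle w_k,v_k\rangle\le 1/k$ yields $\langle z,v\rangle\le 0$, contradicting the hypothesis. The one subtle point is justifying $z\in\partial^2\mathcal{L}(\bar x,\bar\lambda,\bar\mu)(v)$, since Proposition~\ref{property}(ii) is stated only for a fixed direction; I would resolve this by rewriting $w_k\in\partial^2\mathcal{L}(\xi_k,\bar\lambda,\bar\mu)(v_k)$ as $(w_k,-v_k)\in N\big((\xi_k,\nabla_x\mathcal{L}(\xi_k,\bar\lambda,\bar\mu)),\gph\nabla_x\mathcal{L}(\cdot,\bar\lambda,\bar\mu)\big)$ and invoking the closedness of the limiting normal cone under the joint convergence $\xi_k\to\bar x$, $v_k\to v$, $w_k\to z$.
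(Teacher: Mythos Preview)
Your argument is correct and follows essentially the same route as the paper's own proof: contradiction, extraction of a unit tangent direction $v$, placement of $v$ in $[T(\bar x,X)\cap C(\bar x)]\setminus\{0\}$ via first-order expansions and the stationarity identity, then the lower estimate from Theorem~\ref{Taylor_formula} on $[\bar x,x_k]$ combined with $\mathcal{L}(x_k,\bar\lambda,\bar\mu)-\mathcal{L}(\bar x,\bar\lambda,\bar\mu)\le f(x_k)-f(\bar x)$ and the homogeneity from Proposition~\ref{property}(i). Your handling of the limit $w_k\to z\in\partial^2\mathcal{L}(\bar x,\bar\lambda,\bar\mu)(v)$ via the graph description $(w_k,-v_k)\in N\big((\xi_k,\nabla_x\mathcal{L}(\xi_k,\bar\lambda,\bar\mu)),\gph\nabla_x\mathcal{L}(\cdot,\bar\lambda,\bar\mu)\big)$ and outer semicontinuity of the limiting normal cone is in fact more explicit than the paper, which simply asserts this passage without comment.
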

\begin{proof}
	We argue by contradiction. Suppose that $\bar x$ is not an isolated local solution of order $2$ of~\eqref{constraint_problem}. Then there exists a sequence $x_k\in X$ such that $x_k\to \bar x$ $(x_k\not= \bar x)$~and 
	\begin{align}\label{xk}
		f(x_k) \leq f(\bar x)+\frac{1}{k}t_k^2, \ \forall k,
	\end{align}
	where $t_k:=\| x_k-\bar x\| >0$ and $t_k\to 0$ as $k\to\infty$. Set $v_k=\frac{x_k-\bar x}{t_k}.$ Since $v_k$ is bounded, there exists a subsequence, denoted in the same way, such that $v_k \to v$ as $k\to \infty$ and $\|v\|=1$. Clearly, $v\in T(\bar x, X)$.
	
	For $i\in I^0(\bar x)$, we can expand $g_i(x)$ around $\bar x$ as follows
	$$0\ge g_i(x_k)=\langle \nabla g_i(\bar x), x_k-\bar x \rangle + o(x_k-\bar x),$$
	with $o((x_k-\bar x)/{\|x_k-\bar x\|}) \to 0$ as $x_k\to \bar x.$ Dividing both sides by $\|x_k-\bar x\|$ and passing to the limit over the subsequence for which $v_k\to v$, one gets
	$$\langle \nabla g_i(\bar x), v \rangle \le 0, \ i\in I^0(\bar x).$$
	A similar analysis of equality constraints yields the relations
	$$\langle \nabla h_j(\bar x), v \rangle =0, \ j\in J.$$
	Since $\bar x$ is a stationary point, we have  $$\langle \nabla f(\bar x), v \rangle= -\sum_{i\in I^0(\bar x)}\bar{\lambda}_i\langle\nabla g_i(\bar x), v\rangle-\sum_{j\in J}\bar{\mu}_j\langle\nabla h_j(\bar x), v\rangle \ge 0.$$ 
	On the other hand, from~\eqref{xk} we have
	\begin{align*}
		\langle \nabla f(\bar x), x_k-\bar x \rangle +o(x_k-\bar x)=f(x_k)-f(\bar x) \leq \frac{1}{k}t_k^2.
	\end{align*}
	Dividing both sides by $\|x_k-\bar x\|$ and passing to the limit $x_k\to \bar x$ we get $ \langle \nabla f(\bar x), v \rangle \le 0.$
	Consequently, $ \langle \nabla f(\bar x), v \rangle = 0.$ Therefore $v\in C(\bar x).$
	
	By Taylor formula (Theorem~\ref{Taylor_formula}), we can find $z_k\in \partial^2 \mathcal{L}(\xi_k,\bar \lambda, \bar \mu ) (x_k-\bar x)$, where $\xi_k\in [x_k,\bar x]$ such that
	\begin{align*}
		\dfrac{1}{2} \langle z_k, x_k-\bar x\rangle \le \mathcal{L}(x_k, \bar \lambda, \bar \mu) - \mathcal{L}(\bar x, \bar \lambda, \bar \mu) - \langle \nabla_x\mathcal{L}(\bar x, \bar \lambda, \bar \mu) , x_k-\bar x \rangle.
	\end{align*}
	This and the fact that $x_k\in X$ imply that   
	\begin{equation*}\label{formula_1}
		\aligned
		\dfrac{1}{2} &\langle z_k, t_kv_k\rangle \le \mathcal{L}(x_k, \bar \lambda, \bar \mu) - \mathcal{L}(\bar x, \bar \lambda, \bar \mu)
		\\
		&=[f(x_k)-f(\bar x)]+\sum_{i\in I^0(\bar x)}\bar\lambda_i [g_i(x_k)-g_i(\bar x)]+\sum_{j\in J}\bar{\mu}_j[h_j(x_k)-h_j(\bar x)]\leq \frac{1}{k}t_k^2.
		\endaligned
	\end{equation*}
	By Proposition~\ref{property}(i) and the fact that $z_k\in \partial^2 \mathcal{L} (\xi_k, \bar \lambda, \bar \mu) (t_k v_k)$, there exists $z'_k~\in~\partial^2 \mathcal{L}(\xi_k,\bar \lambda, \bar \mu ) (v_k)$ such  that  $z_k=t_k z'_k$.
	Hence
	$$\dfrac{1}{2}t_k^2 \langle z'_k, v_k\rangle \leq   \frac{1}{k}t_k^2,\ \ \forall k\in\mathbb{N},$$ 
	and it implies that $\langle z, v \rangle \le 0$ for some $z\in \partial^2 \mathcal{L}(x,\bar \lambda, \bar \mu ) (v) $, which contradicts the assumptions. 
\end{proof}

Let us see an example to illustrate Theorems \ref{main_theorem} and \ref{sufficient-theorem-1}.
\begin{example} {\rm  Consider the problem~\eqref{constraint_problem} with $m=2$, $p=1$,
		$$f(x_1,x_2)=-\int_0^{x_1} |t|dt + x_2^2,$$ $g_1(x_1,x_2)=-x_1$,  $g_2(x_1,x_2)=-x_2$, and $h(x_1,x_2)=x_1+x_2-1.$  	It is easy to check that $f \in C^{1,1}(\mathbb{R}^2)$ and $g_1,g_2,h\in C^2$. 
		
		The Lagrangian function of problem~\eqref{constraint_problem} is
		$$\mathcal{L}(x, \lambda, \mu)= -\int_0^{x_1} |t|dt + x_2^2 -\lambda_1x_1 -\lambda_2 x_2 + \mu (x_1+x_2-1).$$ 
		Then, we have $\nabla_x \mathcal{L}(x, \lambda,\mu)=( -|x_1|-\lambda_1 +\mu, 2x_2 -\lambda_2 +\mu).$  Hence, $\bar x=(\bar x_1, \bar x_2)\in\mathbb{R}^2$ is a stationary point of \eqref{constraint_problem} with respect to Lagrange multipliers $\bar\lambda=(\bar\lambda_1, \bar\lambda_2)$, $\bar\mu$ if the following conditions hold
		\begin{equation*}\label{equa-1-new}
			\begin{cases}
				-|\bar x_1|-\bar \lambda_1 +\bar\mu=0,
				\\
				2\bar x_2 -\bar\lambda_2 +\bar \mu=0,
				\\
				\bar x_1\geq 0, \bar x_2\geq 0, \bar x_1+\bar x_2=1,
				\\
				\bar\lambda_1\geq 0, \bar\lambda_2\geq 0, 
				\\
				\bar\lambda_1\bar x_1=\bar\lambda_2\bar x_2=0. 
			\end{cases}
		\end{equation*}  
		It is easy to see that the above system has a unique solution $\bar x=(1,0)$ with respect to $\bar \lambda=(0,1)$ and $\bar \mu=1$. Then, $X^0=\{\bar x\}$. Hence, $T(\bar x, X^0)=\{0\}$ and so the second-order necessary optimality condition \eqref{equa-2-new} is satisfied.  By Theorem \ref{main_theorem}, the problem \eqref{constraint_problem} has only one candidate  for optimal solutions that is  $\bar x=(1,0)$. 
		
		We now use Theorem \ref{sufficient-theorem-1} to show that $\bar x=(1,0)$ is an isolated local solution of order~$2$ of \eqref{constraint_problem}. An easy computation shows that  $C(\bar x)=\{0\}$. Hence, $[T(\bar x, X)~\cap~C(\bar x)]\setminus \{0\}$  is empty. This and Theorem \ref{sufficient-theorem-1} imply that $\bar x=(1,0)$ is an isolated local solution of order $2$ of \eqref{constraint_problem}.     
	}
\end{example}

\begin{remark}\rm 
	Theorem~\ref{sufficient-theorem-1} establishes a sufficient optimality condition that differs from the one presented in \cite[Theorem~4.2]{Feng_Li_2020}. Our condition requires that the second-order subdifferential is positive definite for all directions $z$ belonging to the set $\partial^2 \mathcal{L}(\bar x,\bar \lambda, \bar \mu ) (v)$, which is a smaller set compared to the one considered in~\cite[Theorem~4.2]{Feng_Li_2020}. 
\end{remark}
We present a counterexample to demonstrate that our theorem is applicable in situations where the result in \cite[Theorem 4.2]{Feng_Li_2020} cannot be applied.

\begin{example}\rm(Theorem~\ref{sufficient-theorem-1} works but \cite[Theorem 4.2]{Feng_Li_2020} fails)
	Consider the problem~\ref{constraint_problem} with the following data:
	\begin{align*}
		f (x_1, x_2) := x_1 + x_2^2, \ g_1(x_1, x_2) := -x_1, \ g_2(x_1, x_2) := -x_2,\\
		X:=\{x=(x_1, x_2)\in\mathbb{R}^2\,|\, g_1(x_1, x_2)\leq 0, g_2(x_1, x_2)\leq 0 \},
	\end{align*}
	and $\bar x=(0,0)\in X$. First, we observe that $\bar x$ is a stationary point of~\ref{constraint_problem} with respect to a unique Lagrange multiplier $\bar \lambda =(1,0)$. By a simple calculation, we find that
	$$T(\bar x, X)=\{v \in \mathbb{R}^2 \mid \langle \nabla g_i(\bar x), v \rangle \le 0, \forall i \in I^0(\bar x)\}= \mathbb{R}^2_+$$
	and $C(\bar x)=\{0\} \times \mathbb{R}_+$. Then $v=(v_1,v_2)\in [T(\bar x, X)\cap C(\bar x)]\setminus \{0\}$ means that $v_1=0$ and $v_2 > 0$.
	
	The Lagrangian associated with~\ref{constraint_problem} is
	$$\mathcal{L}(x, \bar\lambda)= x_1 +x_2^2 -\bar\lambda_1 x_1-\bar\lambda_2 x_2=x_2^2.$$
	We have $\nabla_x \mathcal{L}(\bar x, \bar\lambda)=(0, 0)$ and $\partial^2 \mathcal{L}(\bar x,\bar \lambda) (v)=(0, 2 v_2).$ So, for any $z\in \partial^2 \mathcal{L}(\bar x,\bar \lambda) (v)$ and $v=(v_1,v_2)\in [T(\bar x, X)\cap C(\bar x)]\setminus \{0\}$, one has $\langle z, v \rangle = 2 v_2^2 > 0$. Thus, by Theorem~\ref{sufficient-theorem-1}, one obtains  $\bar x=(0,0)$ is an isolated local solution of order $2$ of~\ref{constraint_problem}.
	However, as shown in \cite[Example 4.5]{Feng_Li_2020}, Theorem 4.2 in  \cite{Feng_Li_2020} cannot be applicable in this context. 
\end{example}

\section{Second-order sufficient  optimality conditions in multiobjective optimization problems}\label{Section-5}
In this section, we consider the following multiobjective optimization problem
\begin{equation}\label{MOP}
	\mathrm{Min}\,_{\mathbb{R}^q_+}\{\varphi(x):=(\varphi_1(x), \ldots, \varphi_q(x))\,|\, x\in X\},\tag{MP}
\end{equation}
where $\varphi_l\in C^{1,1}(\mathbb{R}^n)$, $l\in L:=\{1, \ldots, q\}$, and $X$ is defined as in \eqref{constraint-set}. 

The Lagrangian associated with the constrained problem~\eqref{MOP} is given by
\begin{align*}
	\mathcal{L} (x, \alpha, \lambda, \mu)=\sum_{l\in L}\alpha_l\varphi_l(x)+\sum\limits_{i\in I} \lambda_i g_i (x)+ \sum\limits_{j\in J} \mu_j h_j(x).
\end{align*}

We say that $v\in\mathbb{R}^n$ is a {\em  critical direction} of \eqref{MOP} at a  $x\in X$ if 
\begin{equation*}
	\begin{cases}
		\langle \nabla\varphi_l(x), v\rangle\leq 0 \ \ &\forall l\in L,
		\\
		\langle \nabla\varphi_l(x), v\rangle= 0 \ \ &\text{at least one}\ \ l\in L,
		\\
		\langle \nabla g_i(x), v\rangle\leq 0 \ \ &\forall i\in I^0(x),
		\\
		\langle \nabla h_j(x), v\rangle = 0 \ \ &\forall j\in j.
	\end{cases}
\end{equation*}
The set of all critical direction of   \eqref{MOP} at $x$ is denoted by $K(x)$.
\begin{definition}\rm  Let $\bar x\in X$. The set of {\em Lagrange multipliers} of \eqref{MOP} at $\bar x$ is defined~by
	\begin{align*}
		\mathrm{A}(\bar x):=\{(\bar\alpha, \bar\lambda, \bar\mu)\in (\mathbb{R}^q_+\setminus\{0\})\times\mathbb{R}^m_+\times\mathbb{R}^p\,&|\, \nabla_x 	\mathcal{L} (\bar x, \bar\alpha, \bar\lambda, \bar\mu)=0,
		\\
		&\ \ \ \ \ \ \bar\lambda_i g_i(\bar x)=0, i\in I\}.
	\end{align*}	
\end{definition}

If the set of Lagrange multipliers of \eqref{MOP} at $\bar x$ is nonempty, then $\bar x$ is called a stationary point of \eqref{MOP}.  

\begin{definition}\rm {\rm (see, e.g.~\cite{Ginchev_etal})} Let $\bar x\in X$. We say that:
	\begin{enumerate}[(i)]
		\item $\bar x$ is a {\em local weak efficient solution} of \eqref{MOP} if there exists a neighborhood $U$ of $\bar x$ such that  
		\begin{equation*}
			\max\{\varphi_1(x)-\varphi_1(\bar x), \ldots, \varphi_q(x)-\varphi_q(\bar x)\}\geq 0 \ \ \forall x\in X\cap U. 
		\end{equation*}
		\item  $\bar x$  is an {\em  isolated local solution of order $2$} of \eqref{MOP} if there exist a positive number $\beta$ and a neighborhood $U$ of $\bar x$ such that
		\begin{equation*}
			\max\{\varphi_1(x)-\varphi_1(\bar x), \ldots, \varphi_q(x)-\varphi_q(\bar x)\}\geq \beta \|x-\bar x\|^2 \ \ \forall x\in X\cap U.
		\end{equation*}
	\end{enumerate}
\end{definition}

The following result gives a sufficient for the nonemptiness of the set of Lagrange multipliers at a given local weak efficient solution of \eqref{MOP}.
\begin{proposition} Suppose that $\bar x\in X$ is a local weak efficient solution of \eqref{MOP}. If  the MSCQ holds at $\bar x$, then   $$\max\{\langle\nabla \varphi_l(\bar x), v \rangle\,|\, l\in L\} \ge 0$$ 
	for every $v\in T(\bar x, X).$ Consequently, $\mathrm{A} (\bar x)$ is nonempty. 
\end{proposition}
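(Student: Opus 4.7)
The plan is to mirror the argument used for part (i) of Theorem~\ref{main_theorem}, adapted to the vector-valued setting. First I would take an arbitrary $v\in T(\bar x,X)$ and, by the definition of the contingent cone, select sequences $t_k\to 0^+$ and $v_k\to v$ with $x_k:=\bar x+t_kv_k\in X$. Since $\bar x$ is a local weak efficient solution of \eqref{MOP} and $x_k\to\bar x$, for all $k$ sufficiently large we have
\begin{equation*}
\max\{\varphi_1(x_k)-\varphi_1(\bar x),\ldots,\varphi_q(x_k)-\varphi_q(\bar x)\}\ge 0,
\end{equation*}
so each such $k$ admits some index $l_k\in L$ with $\varphi_{l_k}(x_k)\ge\varphi_{l_k}(\bar x)$. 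Because $L$ is finite, I can pass to a subsequence along which $l_k\equiv l_0$ is constant. Using the \F\ differentiability of $\varphi_{l_0}$ at $\bar x$, I write
\begin{equation*}
0\le \varphi_{l_0}(x_k)-\varphi_{l_0}(\bar x)=t_k\langle\nabla\varphi_{l_0}(\bar x),v_k\rangle+o(t_k),
\end{equation*}
divide by $t_k$, and let $k\to\infty$ to obtain $\langle\nabla\varphi_{l_0}(\bar x),v\rangle\ge 0$. Hence $\max_{l\in L}\langle\nabla\varphi_l(\bar x),v\rangle\ge 0$, which is the first assertion.

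For the consequence, I would invoke Proposition~\ref{Pro-3.3}: under the MSCQ the ACQ holds at $\bar x$, so $T(\bar x,X)$ coincides with the linearization cone at $\bar x$. Combined with the inequality just proved, this says that the system
\begin{equation*}
\langle\nabla\varphi_l(\bar x),v\rangle<0\ (l\in L),\quad \langle\nabla g_i(\bar x),v\rangle\le 0\ (i\in I^0(\bar x)),\quad \langle\nabla h_j(\bar x),v\rangle=0\ (j\in J)
\end{equation*}
has no solution $v\in\mathbb{R}^n$. Motzkin's theorem of the alternative (\cite[pp.~28--29]{Mangasarian}) then produces nonnegative scalars $\bar\alpha_l$ ($l\in L$), not all zero, nonnegative scalars $\bar\lambda_i$ ($i\in I^0(\bar x)$), and real scalars $\bar\mu_j$ ($j\in J$) with
\begin{equation*}
\sum_{l\in L}\bar\alpha_l\nabla\varphi_l(\bar x)+\sum_{i\in I^0(\bar x)}\bar\lambda_i\nabla g_i(\bar x)+\sum_{j\in J}\bar\mu_j\nabla h_j(\bar x)=0.
\end{equation*}
Setting $\bar\lambda_i:=0$ for $i\in I\setminus I^0(\bar x)$ extends $\bar\lambda$ to a vector in $\mathbb{R}^m_+$ for which complementarity $\bar\lambda_ig_i(\bar x)=0$ holds for every $i\in I$. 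This produces $(\bar\alpha,\bar\lambda,\bar\mu)\in\mathrm{A}(\bar x)$, so $\mathrm{A}(\bar x)\neq\emptyset$.

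The only mildly delicate step is the subsequence extraction that stabilizes the active objective index $l_k$; it hinges on $L$ being finite and on weak efficiency being a ``$\max$''-type condition, so that a single coordinate suffices to witness the nonoptimality inequality for $x_k$ at each step. Beyond this, the argument is essentially a vector-valued transcription of the Motzkin-based derivation of $\Lambda(\bar x)\neq\emptyset$ in Theorem~\ref{main_theorem}(i), with the single gradient $\nabla f(\bar x)$ replaced by the family $\{\nabla\varphi_l(\bar x)\}_{l\in L}$ feeding into the strict-inequality block of Motzkin's alternative and thereby guaranteeing $\bar\alpha\neq 0$.
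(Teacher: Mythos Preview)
Your proposal is correct and follows essentially the same route as the paper. The only cosmetic difference is that for the first assertion the paper argues by contradiction (assuming $\langle\nabla\varphi_l(\bar x),v\rangle<0$ for all $l\in L$ and producing a feasible sequence dominating $\bar x$ in every coordinate), whereas you argue directly by stabilizing the active index $l_k$ along a subsequence; the Motzkin step and the use of Proposition~\ref{Pro-3.3} are identical.
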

\begin{proof}
	Suppose on the contrary that there exists a vector $v~\in~T(\bar x, X)$ such that   $\max\{\langle\nabla \varphi_l(\bar x), v \rangle\,|\, l\in L\} < 0,$ or, equivalently, $\langle\nabla \varphi_l(\bar x), v \rangle<0$ for all $l\in L$.  Let $t_k\to 0^+$ and $v_k\to v$ such that $\bar x+t_kv_k\in X$ for all $k\in\mathbb{N}$. For each $l\in L$, it follows~from
	\begin{equation*}
		\lim_{k\to\infty}\frac{\varphi_l(\bar x+t_kv_k)-\varphi_l(\bar x)}{t_k}=\langle\nabla \varphi_l(\bar x), v \rangle<0
	\end{equation*}
	that there exists $k_l\in\mathbb{N}$ such that
	\begin{equation*}
		\varphi_l(\bar x+t_kv_k)<\varphi_l(\bar x) \ \ \forall k\geq k_l.
	\end{equation*}   	
	Let $k_0:=\max\{k_l\,|\, l\in L\}$. Then we have
	\begin{equation*}
		\varphi_l(\bar x+t_kv_k)<\varphi_l(\bar x) \ \ \forall l\in L, k\geq k_0, 
	\end{equation*} 
	contrary to the fact that $\bar x$ is a local efficient solution of \eqref{MOP}. Hence 
	$$\max\{\langle\nabla \varphi_l(\bar x), v \rangle\,|\, l\in L\} \ge 0 \ \ \forall v\in T(\bar x, X).$$
	From this, the MSCQ condition at $\bar x$, and Proposition \ref{Pro-3.3}, we see that the following system has no solution $v$
	\begin{equation*}
		\begin{cases}
			\langle \nabla\varphi_l(\bar x), v\rangle< 0 \ \ &\forall l\in L,
			\\
			\langle \nabla g_i(\bar x), v\rangle\leq 0 \ \ &\forall i\in I^0(\bar x),
			\\
			\langle \nabla h_j(\bar x), v\rangle = 0 \ \ &\forall j\in j.
		\end{cases}
	\end{equation*}
	By the Motzkin's theorem of the alternative (see \cite[pp. 28--29]{Mangasarian}), $\mathrm{A}(\bar x)$ is nonempty. The proof is complete.
\end{proof}

The following theorem presents a second-order sufficient optimality condition for an isolated local solution of order $2$ of \eqref{MOP}.
\begin{theorem}\label{suffi_MP} Suppose that $\bar x$ is a stationary point of~\eqref{MOP} and $(\bar\alpha, \bar\lambda, \bar\mu)\in\mathrm{A}(\bar x)$. Assume that for every $v\in [T(\bar x, X)\cap K(x)]\setminus\{0\}$ and for any $z\in\partial^2\mathcal{L}(\bar x, \bar\alpha, \bar\lambda, \bar\mu)$  
	one~has
	\begin{equation*}
		\langle z, v\rangle>0.
	\end{equation*}
	Then $\bar x$ is an isolated local solution of order $2$ of \eqref{MOP}. 
\end{theorem}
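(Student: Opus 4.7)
My plan is to mirror the contradiction strategy used in Theorem~\ref{sufficient-theorem-1}, upgraded to handle the vector objective. Suppose toward a contradiction that $\bar x$ is not an isolated local solution of order~$2$ of \eqref{MOP}; then there exists a sequence $\{x_k\}\subset X\setminus\{\bar x\}$ with $x_k\to\bar x$ and $\max_{l\in L}(\varphi_l(x_k)-\varphi_l(\bar x))< t_k^2/k$, where $t_k:=\|x_k-\bar x\|$. Setting $v_k:=(x_k-\bar x)/t_k$ and passing to a subsequence, $v_k\to v$ with $\|v\|=1$, and clearly $v\in T(\bar x,X)$.

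The first main task is to prove that $v\in K(\bar x)$. Expanding each $\varphi_l$, each $g_i$ for $i\in I^0(\bar x)$, and each $h_j$ around $\bar x$, dividing by $t_k$, and sending $k\to\infty$ yields $\langle\nabla\varphi_l(\bar x),v\rangle\le 0$ for $l\in L$, $\langle\nabla g_i(\bar x),v\rangle\le 0$ for $i\in I^0(\bar x)$, and $\langle\nabla h_j(\bar x),v\rangle=0$ for $j\in J$. The nontrivial ingredient is an index $l\in L$ with $\langle\nabla\varphi_l(\bar x),v\rangle=0$. For this, I apply the stationarity relation $\nabla_x\mathcal{L}(\bar x,\bar\alpha,\bar\lambda,\bar\mu)=0$ to $v$, using complementary slackness to restrict the sum in $i$ to $I^0(\bar x)$, and rewrite it as
\[
\sum_{l\in L}\bar\alpha_l\langle\nabla\varphi_l(\bar x),v\rangle = -\sum_{i\in I^0(\bar x)}\bar\lambda_i\langle\nabla g_i(\bar x),v\rangle-\sum_{j\in J}\bar\mu_j\langle\nabla h_j(\bar x),v\rangle\ge 0.
\]
Since every summand on the left is $\le 0$ and $\bar\alpha\ne 0$, each summand vanishes, and any index $l$ with $\bar\alpha_l>0$ delivers the required equality, so $v\in K(\bar x)$.

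The second main task is the second-order Taylor step. Applying Theorem~\ref{Taylor_formula} to $\mathcal{L}(\cdot,\bar\alpha,\bar\lambda,\bar\mu)\in C^{1,1}(\mathbb{R}^n)$ on $[\bar x,x_k]$ produces $\xi_k\in[\bar x,x_k]$ and $z_k\in\partial^2\mathcal{L}(\xi_k,\bar\alpha,\bar\lambda,\bar\mu)(x_k-\bar x)$ with $\tfrac12\langle z_k,x_k-\bar x\rangle\le\mathcal{L}(x_k,\bar\alpha,\bar\lambda,\bar\mu)-\mathcal{L}(\bar x,\bar\alpha,\bar\lambda,\bar\mu)$, the linear term vanishing by stationarity. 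Each term $\bar\lambda_i(g_i(x_k)-g_i(\bar x))$ is $\le 0$ (by feasibility and complementary slackness) and the equality terms vanish, so the right-hand side is dominated by $\bigl(\sum_l\bar\alpha_l\bigr)\max_l(\varphi_l(x_k)-\varphi_l(\bar x))<\bigl(\sum_l\bar\alpha_l\bigr)t_k^2/k$. Using positive homogeneity (Proposition~\ref{property}(i)) to write $z_k=t_k z'_k$ with $z'_k\in\partial^2\mathcal{L}(\xi_k,\bar\alpha,\bar\lambda,\bar\mu)(v_k)$ and dividing by $t_k^2$, I obtain $\langle z'_k,v_k\rangle\le 2\bigl(\sum_l\bar\alpha_l\bigr)/k$.

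Finally, by~\eqref{ct2.1} the vectors $z'_k$ lie in $\partial\langle v_k,\nabla_x\mathcal{L}(\cdot,\bar\alpha,\bar\lambda,\bar\mu)\rangle(\xi_k)$, and the local Lipschitz property of $\nabla_x\mathcal{L}$ together with $\|v_k\|$ bounded yields $\{z'_k\}$ bounded. Extracting a convergent subsequence $z'_k\to z$ and invoking the outer semicontinuity of the limiting normal cone to $\gph\partial_x\mathcal{L}(\cdot,\bar\alpha,\bar\lambda,\bar\mu)$ (the mechanism underlying Proposition~\ref{property}(ii), now applied with varying $v_k\to v$), I conclude $z\in\partial^2\mathcal{L}(\bar x,\bar\alpha,\bar\lambda,\bar\mu)(v)$. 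Letting $k\to\infty$ in the previous inequality gives $\langle z,v\rangle\le 0$, contradicting the hypothesis as $v\in[T(\bar x,X)\cap K(\bar x)]\setminus\{0\}$. The step I expect to be the main obstacle is the ``at least one equality'' part of $v\in K(\bar x)$; the sign-splitting argument based on stationarity of $\mathcal L$ and $\bar\alpha\neq 0$ is what makes this go through cleanly.
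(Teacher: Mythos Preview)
Your proposal is correct and follows essentially the same contradiction strategy as the paper's own proof: the same choice of $x_k$, $t_k$, $v_k\to v$, the same first-order expansions to place $v$ in $T(\bar x,X)\cap K(\bar x)$, and the same application of Theorem~\ref{Taylor_formula} plus Proposition~\ref{property}(i) to reach a $z$ with $\langle z,v\rangle\le 0$. Your treatment of the ``at least one equality'' step (splitting the stationarity identity into nonpositive and nonnegative sides and forcing each summand to vanish) and your explicit acknowledgment that the closure step needs outer semicontinuity with varying $v_k\to v$ are, if anything, slightly more careful than the paper's exposition, but the argument is the same in substance.
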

\begin{proof}
	Suppose on the contrary that $\bar x$ is not an isolated local solution of order $2$ of~\eqref{MOP}. Then there exists a sequence $x_k\in X$ such that $x_k\to \bar x$ $(x_k\not= \bar x)$ and 
	\begin{align}\label{xk-MOP}
		\max\{\varphi_l(x_k)-\varphi_l(\bar x)\,|\, l\in L\}\leq \frac{1}{k}t_k^2, \ \forall k\in \mathbb{N},
	\end{align}
	where $t_k:=\| x_k-\bar x\| >0$. Clearly, $t_k\to 0$ as $k\to\infty$. Set $v_k:=\frac{x_k-\bar x}{t_k}.$ Since $v_k$ is bounded, there exists a subsequence, denoted in the same way, such that $v_k \to v$ as $k\to \infty$ and $\|v\|=1$. By  a similar argument as in the proof of Theorem \ref{sufficient-theorem-1}, we see~ that 
	\begin{align*}
		&v\in T(\bar x, X), \langle \nabla g_i(\bar x), v \rangle \le 0, \ \ i\in I^0(\bar x), \ \ \text{and}
		\\
		&\langle \nabla h_j(\bar x), v \rangle =0, \ \ j\in J.
	\end{align*}
	It follows from \eqref{xk-MOP} that
	\begin{equation*}
		\varphi_l(x_k)-\varphi_l(\bar x)\leq \frac{1}{k}t_k^2, \ \ \forall l\in L, k\in \mathbb{N}.
	\end{equation*} 
	By Taylor  formula, one has
	\begin{equation*}
		\varphi_l(x_k)-\varphi_l(\bar x)=t_k\langle\nabla\varphi_l(\bar x), v_k\rangle +o(t_k)\leq \frac{1}{k}t_k^2.  
	\end{equation*}
	Dividing both sides by $t_k$ and passing to the limit $k\to\infty$ we get 
	$$\langle\nabla\varphi_l(\bar x), v\rangle \le 0, \ \ l\in L.$$
	We claim that  there exists $l\in L$ such that  $\langle\nabla\varphi_l(\bar x), v\rangle=0$. If otherwise, then it follows from $(\bar\alpha, \bar\lambda, \bar\mu)\in\mathrm{A}(\bar x)$ 
	that
	\begin{align*}
		0= \langle\nabla_x\mathcal{L}(\bar x, \bar\alpha, \bar\lambda, \bar{\mu}), v\rangle&=\sum_{l\in L}\bar{\alpha}_l\langle \nabla \varphi_l(\bar x), v \rangle+\sum_{i\in I^0(\bar x)}\bar{\lambda}_i\langle\nabla g_i(\bar x), v\rangle
		\\
		&\ \ \ \ \ \ \ \ \ \ \ \  \  \ \ \ \ \ \ \ \  \ \ \ \ \ +\sum_{j\in J}\bar{\mu}_j\langle\nabla h_j(\bar x), v\rangle
		\\
		&=\sum_{l\in L}\bar{\alpha}_l\langle \nabla \varphi_l(\bar x), v \rangle+\sum_{i\in I^0(\bar x)}\bar{\lambda}_i\langle\nabla g_i(\bar x), v\rangle<0,
	\end{align*}
	a contradiction. Hence, $v\in [T(\bar x, X)\cap K(x)]\setminus\{0\}$.

	By  Theorem~\ref{Taylor_formula}, we can find $z_k\in \partial^2 \mathcal{L}(\xi_k, \bar\alpha,\bar \lambda, \bar \mu ) (t_kv_k)$, where $\xi_k\in [x_k,\bar x]$ such~that
	\begin{align*}
		\dfrac{1}{2} \langle z_k, x_k-\bar x\rangle \le \mathcal{L}(x_k, \bar\alpha, \bar \lambda, \bar \mu) - \mathcal{L}(\bar x, \bar\alpha, \bar \lambda, \bar \mu) - \langle \nabla_x\mathcal{L}(\bar x, \bar\alpha, \bar \lambda, \bar \mu) , x_k-\bar x \rangle.
	\end{align*}
	This and the fact that $x_k\in X$ imply that   
	\begin{equation*} 
		\aligned
		\dfrac{1}{2} \langle z_k, t_kv_k\rangle &\le \mathcal{L}(x_k, \bar\alpha, \bar \lambda, \bar \mu) - \mathcal{L}(\bar x, \bar\alpha, \bar \lambda, \bar \mu)- \langle \nabla_x\mathcal{L}(\bar x, \bar\alpha, \bar \lambda, \bar \mu) , x_k-\bar x \rangle
		\\
		&=\sum_{l\in L}\bar{\alpha}_l[\varphi_l(x_k)-\varphi_l(\bar x)]+\sum_{i\in I^0(\bar x)}\bar\lambda_i [g_i(x_k)-g_i(\bar x)]
		\\
		&\ \ \ \ \ \ \  \ \ \ \ \ \ \ \ \ \ \ \  \ \ \ \ \ \ \ \ \ \ \ \  \  +\sum_{j\in J}\bar{\mu}_j[h_j(x_k)-h_j(\bar x)]
		\\
		&\leq \frac{1}{k}\bigg(\sum_{l\in L}\bar{\alpha}_l\bigg)t_k^2.
		\endaligned
	\end{equation*}
	By Proposition~\ref{property}(i) and the fact that $z_k\in \partial^2 \mathcal{L}(\xi_k, \bar\alpha,\bar \lambda, \bar \mu ) (t_kv_k)$, there exists $z'_k~\in~\partial^2 \mathcal{L}(\xi_k, \bar\alpha,\bar \lambda, \bar \mu )  (v_k)$ such  that  $z_k=t_k z'_k$.
	Hence
	$$\dfrac{1}{2}t_k^2 \langle z'_k, v_k\rangle \leq   \frac{1}{k}\bigg(\sum_{l\in L}\bar{\alpha}_l\bigg)t_k^2,\ \ \forall k\in\mathbb{N},$$ 
	and this implies that $\langle z, v \rangle \le 0$ for some $z\in \partial^2 \mathcal{L}(x, \bar\alpha,\bar \lambda, \bar \mu ) (v) $, which contradicts the assumptions. 
\end{proof}

To illustrate Theorem \ref{suffi_MP}, we conclude this section with an example.
\begin{example}  \rm 
	Consider the problem~\eqref{MOP} with the following data:
	\begin{align*}
		\varphi_1(x)=x^2, \varphi_2(x)=\int_0^{x} |t|dt,\  g(x)=-x \ \ \forall x\in\mathbb{R},
		\\
		\varphi(x):=(\varphi_1(x), \varphi_2(x)), X:=\{x\in\mathbb{R}\,|\, g(x)\leq 0\},
	\end{align*}
	and $\bar x=0\in X$. First, we observe that $\bar x$ is a stationary point of~\eqref{MOP} with the Lagrange multipliers $ \bar \lambda =0, \bar \alpha=(1, 0)$. By simple calculation, we find
	$$T(\bar x, X)=\mathbb{R}_+$$
	and $K(\bar x)=\mathbb{R}_+$. So if $v\in [T(\bar x, X)\cap K(\bar x)]\setminus \{0\}$ then $v>0$.
	
	The Lagrangian associated with~\eqref{MOP} is
	$$\mathcal{L}(x,\alpha, \lambda)=\alpha_1 x^2  + \alpha_2 \int_0^{x} |t|dt -\lambda x.$$ 
	It is clear that  $\nabla_x \mathcal{L}(x, \alpha,\lambda)=2 \alpha_1 x +\alpha_2 |x|-\lambda$. So for any $v$  one has
	$$\langle \nabla_x \mathcal{L}(x,\bar \alpha, \bar\lambda), v \rangle= 2 v\bar \alpha_1 x +v\bar \alpha_2 |x|-\bar \lambda=2 v x .$$
	Since $\mathcal{L}$ is a Lipschitz function, from~\eqref{ct2.1}, we have
	\begin{align*}\partial^2 \mathcal{L}(x,\bar \alpha,\bar \lambda) (v) &=\partial \langle v, \nabla_x \mathcal{L}(.,\bar \alpha,\bar \lambda) \rangle (\bar x)=\{2v\}. 
	\end{align*}
	This implies that
	$$\langle z, v\rangle=2v^2>0$$
	for all $z\in \partial^2 \mathcal{L}(x, \bar \alpha,\bar \lambda)(v)$ and $v\in [T(\bar x, X)\cap K(\bar x)]\setminus \{0\}$. Thus, by Theorem~\ref{suffi_MP}, we conclude that $\bar x=0$ is an isolated local solution of order $2$ of \eqref{MOP}. 
\end{example}
\section{Concluding remarks} 
This paper focuses on deriving second-order necessary and sufficient optimality conditions for $C^{1,1}$ optimization problems in finite-dimensional spaces, subject to both inequality and equality constraints. Our approach utilizes the Lagrangian function and the concept of limiting second-order subdifferentials to obtain these conditions. These results represent a significant extension of existing results for optimization problems with $C^{1,1}$  data.

\section*{Acknowledgments} This research is funded by Hanoi Pedagogical University 2 under grant number HPU2.2023-UT-11.

\section*{Disclosure statement} 
The authors declare that they have no conflict of interest.

\section*{Data availability} There is no data included in this paper.


\begin{thebibliography}{20}
	\bibitem{An2021} {An DTV.} {Second-order optimality conditions for infinite-dimensional quadratic programs}.  J. Optim. Theory Appl.  2022;192:426--442.doi: 10.1007/s10957-021-01952-6 
	\bibitem{AnYen2021} {An DTV,  Yen ND.} {Optimality conditions based on the Fr\'echet second-order subdifferential}. J. Global Optim. 2021;81:351--365. doi: 10.1007/s10898-021-01011-4
	
	\bibitem{AnXuYen} An DTV, Xu HK, Yen ND. Fréchet second-order subdifferentials of Lagrangian functions and optimality conditions. SIAM J. Optim. 2023;33:766-784. doi: 10.1137/22M1512454
	
	\bibitem{Andreani_Martinez_Schuverdt_2007} {Andreani R, Martinez JM, Schuverdt ML.} {On second-order optimality conditions for nonlinear programming}. Optimization, 2007;56:529--542. doi: 10.1080/02331930701618617
	
	\bibitem{Ben-Tal1980} {Ben-Tal A.} {Second-order and related extremality conditions in nonlinear programming}. J. Optim. Theory Appl. 1980;31:143--165. doi: 10.1007/BF00934107
	
	\bibitem{Ben-Tal1982} {Ben-Tal A, Zowe J.} {Necessary and sufficient optimality conditions for a class of nonsmooth minimization problems}. Math. Programming. 1982;24:70--91. doi: 10.1007/BF01585095
	
	\bibitem{Bonnans_Cominetti_Shapiro_1999} {Bonnans JF,  Cominetti R,  Shapiro A.}  {Second order optimality conditions based on parabolic second order tangent sets} SIAM J. Optim. 1999;9:466--492. doi: 10.1137/S1052623496306760
	
	\bibitem{Bonnans_Shapiro_2000} {Bonnans JF,  Shapiro A.}   {Perturbation analysis of optimization problems}. New York: Springer; 2000. doi: 10.1007/978-1-4612-1394-9
	
	
	\bibitem{ChieuLeeYen2017} {Chieu NH, Lee GM,   Yen ND.}  {Second-order subdifferentials and optimality conditions for $ C^{1} $-smooth optimization problems.} Appl. Anal. Optim. 2017;1:461--476.
	
	\bibitem{Feng_Li_2020}  {Feng M,  Li S.} {On second-order Fritz John type optimality conditions for a class of differentiable optimization problems.} Appl. Anal. 2020;99:2594--2608. doi: 10.1080/00036811.2019.1573989
	
	\bibitem{Ginchev_Ivanov_2008} {Ginchev I,  Ivanov VI.}  {Second-order optimality conditions for problems with $C^1$ data}. J. Math. Anal. Appl. 2008;340:646--657. doi: 10.1016/j.jmaa.2007.08.053
	
	\bibitem{Gutierrez_Jimenez_Novo_2009} { Guti\'{e}rrez C,   Jim\'{e}nez B,  Novo V.} {New second-order directional derivative and optimality conditions in scalar and vector optimization}. J Optim. Theory Appl. 2009;142:85--106. doi: 10.1007/s10957-009-9525-4
	
	
	
	\bibitem{HSN_1984} { Hiriart-Urruty J-B, Strodiot J-J, Nguyen VH.}  {Generalized Hessian matrix and second-order optimality conditions for problems with $C^{1,1}$ data}. Appl. Math. Optim.  1984;11:43--56. doi: 10.1007/BF01442169
	
	
	\bibitem{Huy_Tuyen}  {Huy NQ, Tuyen NV.} {New second-order optimality conditions for a class of differentiable optimization problems}. J. Optim. Theory Appl. 2016;171:27--44. doi: 10.1007/s10957-016-0980-4 
	
	\bibitem{Huy-Kim-Tuyen}  {Huy NQ, Kim DS, Tuyen NV.} {New second-order Karush--Kuhn--Tucker optimality conditions for vector optimization}. Appl. Math. Optim. 2019;79:279--307. doi: 10.1007/s00245-017-9432-2
	
	\bibitem{Hung-Tuan-Tuyen}  {Hung NH, Tuan HN,   Tuyen NV.} {On approximate quasi Pareto solutions in nonsmooth semi-infinite interval-valued vector optimization problems}. Appl. Anal. 2023;102:2432--2448. doi: 10.1080/00036811.2022.2027385 
	
	
	\bibitem{Khanh-et al-24} {Khanh PD, Khoa VVH,  Mordukhovich BS,  et al.} {Second-order subdifferential optimality conditions in nonsmooth optimization},  to appear in SIAM J. Optim. (2024); arXiv:2312.16277.
	
	
	\bibitem{Jeyakumar_Luc_1998} {Jeyakumar V, Luc DT.}   {Approximate Jacobian matrics for nonsmooth continuous maps and $C^1$-optimization}. SIAM J. Control Optim. 	1998;36:1815--1832. doi: 10.1137/S036301299631174
	
	\bibitem{Jeyakumar_Wang_1999}  {Jeyakumar V,  Wang  X.}  {Approximate Hessian matrices and second-order optimality conditions for nonlinear programming problems with $C^1$-data}. J. Aust. Math. Soc.  1999;40:403--420. doi: 10.1017/S0334270000010985
	
	\bibitem{Penot1999}   {Penot J-P.} {Second-order conditions for optimization problems with constraints}. SIAM J. Control Optim. 1999;37:303--318. doi: 10.1137/S0363012996311095
	
	\bibitem{Tuyen-Huy-Kim}  { Tuyen NV, Huy NQ, Kim DS.} {Strong second-order Karush--Kuhn--Tucker optimality conditions for vector optimization}. Appl. Anal. 2020;99:103--120. doi: 10.1080/00036811.2018.1489956
	
	\bibitem{Yang_1993} { Yang X.} {Second-order conditions in $C^{1,1}$ optimization with applications.}  Numer. Funct. Anal. Optim. 1993;14:621--632. doi: 10.1080/01630569308816543
	
	\bibitem{Mordukhovich_1992} Mordukhovich BS. Sensitivity analysis in nonsmooth optimization. In: Field DA, Komkov V, editors. Theoretical aspects of industrial design. Vol. 58, SIAM Proceedings in Applied Mathematics. Philadelphia, PA: SIAM; 1992. p. 32--46.
	
	\bibitem{Mordukhovich_2006}  Mordukhovich BS. Variational analysis and generalized differentiation. Berlin: Springer; 2006. Volume I: Basic theory. doi: 10.1007/3-540-31247-1
	
	
	\bibitem{Mordukhovich-2024} Mordukhovich BS. Second-order variational analysis in optimization, variational stability, and control-theory, algorithms, applications.  Cham: Springer; 2024. doi: 10.1007/978-3-031-53476-8
	
	\bibitem{Mo_Ou_2001} {Mordukhovich BS, Outrata JV.} {On second-order subdifferentials and their 	applications.} SIAM J. Optim. 2001;12:139--169. doi: 10.1137/S1052623400377153
	
	
	
	\bibitem{Nadi_Jafarani2022} {Nadi MT,  Zafarani J.}  {Second-order optimality conditions for constrained optimization problems with $C^1$ data via regular and limiting subdifferentials}. J. Optim. Theory Appl.  2022;193:158--179. doi: 10.1007/s10957-021-01890-3
	
	\bibitem{Mordukhovich_2018}  Mordukhovich BS. Variational analysis and applications. Cham: Springer; 2018. doi: 10.1007/978-3-319-92775-6 
	
	\bibitem{Jahn_2011}  { Jahn J.} {Vector optimization: Theory, applications, and extensions}. Berlin: Springer; 2011. doi:  10.1007/978-3-642-17005-8 
	
	\bibitem{Auslender_1984} {Auslender A.} {Stability in mathematical programming with nondifferentiable data}. SIAM J. Control Optim. 1984;22:239–254. doi: 10.1137/0322017
	
	
	
	\bibitem{Chieu-Hien-17} {Chieu NH,  Hien LV.} {Computation of graphical derivative for a class of normal cone mappings under a very weak condition}. SIAM J. Optim. 2017;27:190--204. doi: 10.1137/16M1066816
	
	\bibitem{Mangasarian} {Mangasarian OL.} {Nonlinear programming.} New York: McGraw-Hill; 1969. doi: 10.1137/1.9781611971255
	
	\bibitem{Rudin_PMA1976}  Rudin W. Principles of mathematical analysis. 3rd ed. New York: McGraw-Hill Book Co.; 1976.
	
	\bibitem{Lang_1983}  {Lang S.} {Real analysis}. 2nd ed. Massachusetts: Addison-Wesley; 1983. 
	
	\bibitem{Ginchev_etal} {Ginchev I, Guerraggio A, Rocca M.} {From scalar to vector optimization}. Appl. Math. 2006;51:5--36. doi: 10.1007/s10492-006-0002-1
\end{thebibliography}
\end{document}